\documentclass{article}

\usepackage{amsmath,leftindex,amsfonts,amssymb,amsthm,graphicx,float}

\newtheorem{theorem}{Theorem}[section]

\newtheorem{proposition}{Proposition}[section]
\newtheorem{remark}{Remark}[section]
\newtheorem{lemma}{Lemma}[section]
\newtheorem{corollary}{Corollary}[section]
\theoremstyle{definition}
\newtheorem{example}{Example}[section]

\renewcommand{\Re}{\operatorname{Re}}

\def\R{{\mathcal{R}}}

\def\A{{\mathcal{A}}}

\def\B{{\mathcal{B}}}
\def\M{\mathfrak{M}}
\def\K{\mathfrak{K}}

\def\CC{\mathbb{C}}
\def\RR{\mathbb{R}}

\def\HR{\mathbb{H}_r}
\def\RA1{\mathfrak{R}_{A'}}
\def\NA1{\mathfrak{N}_{A'}}

\def\id{\operatorname{id}}

\newcommand{\mat}[4]{\left[\begin{array}{cc}#1 & #2 \\ #3 & #4 \\
	\end{array}\right]}

\author{Bogdan D. Djordjevi\'c\footnote{Mathematical Institute of the Serbian Academy of Sciences and Arts, (MISASA), Serbia. \\
Institute of
Mathematics and Informatics, Bulgarian Academy of Sciences (IMI-BAS), Bulgaria.\\
Faculty of Mathematics and Informatics, Sofia University ``St. Kliment Ohridski" (FMI-SU), Bulgaria.\\
bogdan.djordjevic@turing.mi.sanu.ac.rs; bogdan.djordjevic93@gmail.com
}}
\date{}

\begin{document}

\title{Solving the Sylvester equation in Banach modules}

\maketitle

\begin{abstract} 
For given unital complex Banach algebras $\A_1$ and $\A_2$, let $\M$ be a Banach module acting between them. Let $a\in\A_1$, $b\in\A_2$, and $c\in\M$ be provided such that $\sigma_{\A_1}(a)\cap\sigma_{\A_2}(b)\neq\emptyset$. In this paper we completely characterize the consistency of the Sylvester equation $$ax-xb=c.$$
Precisely, we establish verifiable sufficient and necessary solvability conditions, and we provide some formulas for particular solutions $x\in\M$ when the equation is solvable.  Moreover, we characterize the uniqueness of the solutions. \end{abstract}

{ MSC 2020 Classification: 46H25; 46H10; 46H30; 47D03; 47A08.}

{ Keyphrases: Sylvester equation; Banach modules; Banach algebras; Operator matrices.}

\section{Introduction}
Let $\A_1$ and $\A_2$ be unital complex Banach algebras. A Banach space $\M$ defines a Banach $(\A_1,\A_2)-$module, if  it is simultaneously a left Banach module for $\A_1$, and a right Banach module for $\A_2$, with respect to some (fixed) multiplications $\cdot_1:\A_1\times\M\to\M$, and $\cdot_2:\M\times\A_2\to\M$. It is inherently understood that the sub-multiplicativity is preserved within such structures, that is,
$$\|a_1\cdot_{1} c\cdot_{2} a_2\|_{\M}\leq \|a_1\|_{\A_1}\cdot\|c\|_\M\cdot\|a_2\|_{\A_2},$$
for any $a_i\in\A_i$, $c\in\M$, and that the associative law holds:
$$\left(a_1\cdot_1 c\right)\cdot_2 a_2=a_1\cdot_1 \left(c\cdot_2 a_2\right)\in\M.$$
It is also assumed that $\M$ is not a trivial module, meaning that, if $c\neq 0_\M$, then $1_{\A_1}\cdot c\neq 0_\M$ and $c\cdot 1_{\A_2}\neq 0_\M$.

With respect to the previous notation, let $a\in\A_1$, $b\in\A_2$, and $c\in\M$ be given elements. In this paper we study and solve the algebraic Sylvester equation  
\begin{equation}
\label{sylm-}
ax-xb=c
\end{equation}
for the unknown solution(s) $x\in\M$: we establish both sufficient and necessary solvability conditions, and provide several formulas for particular solutions. In addition, we obtain new necessary and sufficient conditions under which the solution is unique. To the best of our knowledge, these results are original and represent a significant progress compared to the existing findings in the available literature.

\subsection{Sylvester equations: Known results}

The Sylvester equation \eqref{sylm-} is a well-known equation which has been extensively studied in the matrix and operator setting (see \cite{R2}, \cite{RBPR}, \cite{RBMU}, \cite{MD}, \cite{SGL}, \cite{RW}, \cite{JJS}, \cite{HWXSJH}), and even in rings with involution \cite{MGLD}, Hilbert $C^*-$modules  \cite{ZMREMMFM}, and semisimple Banach algebras \cite{AS}.

The most well-known criterion regarding the existence and uniqueness of a solution lies in spectral properties of $a$ and $b$: when $a$ and $b$ are bounded linear operators given in the corresponding Banach spaces, provided such that their spectrums are disjoint, then for any given bounded linear operator $c$, there exists a unique bounded linear operator $x$ (in the respective operator space), such that \eqref{sylm-} holds. Moreover, when $a$, $b$, and $c$ are matrices (or operators in finite-dimensional vector spaces), then there exists a unique solution $x$ to \eqref{sylm-}, if and only if the spectra of $a$ and $b$ are disjoint. For this reason, in the bounded operator (and matrix) setting, the equation is said to be \emph{regular} whenever the spectrums of $a$ and $b$ are disjoint, and it is called \emph{singular} otherwise.  Regular Sylvester equations and their generalizations in matrices and operators have been extensively studied in the available literature, see e.g.  \cite{R2}, \cite{RBPR}, \cite{RBMU}, \cite{MD}, \cite{RW}, \cite{JJS}, \cite{HWXSJH}, and numerous references therein.

 Conversely, singular Sylvester equations lack such results. There are some special cases in which the equation has been solved successfully: for instance, when $\A_1$, $\A_2$, and $\M$ are finite-dimensional, i.e., when the equation \eqref{sylm-} is provided in matrices, solvability conditions have been established and the solution set has been described completely,  see \cite{ECLHDS}, \cite{DIN}, \cite{BDND1}, \cite{FRG}, \cite{GW}, \cite{REH}, and \cite{ASMD}. Moreover, when $\A_1$, $\A_2$, and $\M$ are Banach spaces of bounded linear operators (or even closed operators in some instances), papers \cite{BL}, \cite{BD2}, \cite{BDD2},  \cite{ZMREMMFM}, and \cite{HWXSJH}, provide some sufficient solvability conditions for \eqref{sylm-}, as well as some forms for particular solutions in those settings. These results usually rely on the existence of Riesz points in the respective spectra, or demand some stronger properties, like self-adjointness, Drazin-Koliha invertibility, von Neumann regularity, etc.  The difficulty with the singular equation is that the nonempty spectral intersection for $a$ and $b$ does not guarantee (nor disprove) neither the existence nor the uniqueness of solutions, therefore this scenario always has some underlying issues. Moreover, the solutions obtained in the said papers (\cite{BD1}, \cite{BD2}, \cite{BDD2},  \cite{ZMREMMFM}, and \cite{HWXSJH}) rely heavily on the spectral properties of $a$ and $b$ and the corresponding eigenspaces, thus making them not easily obtainable. 

Apart from the previously mentioned results, papers \cite{BD1}, \cite{BDZG}, and \cite{HC} offer some interesting necessary solvability conditions for \eqref{sylm-}, without an explicit form for a particular solution. Finally, the most well-known necessary sovlability criterion for \eqref{sylm-} (sans uniqueness) is the Roth's removal rule, which states that, if \eqref{sylm-} is solvable, then the matrices
$\mat{a}{c}{0}{b}$ and $\mat{a}{0}{0}{b}$ are similar, and the converse statement holds only in finite-dimensional spaces. Still, none of the necessary conditions mentioned here can be utilized for obtaining sufficient sovlability conditions for \eqref{sylm-} in the infinite-dimensional case, nor do they offer a way for finding a particular solution.

\subsection{Banach module setting}

Throughout this paper we are working with fixed unital complex Banach algebras $\A_1$ and $\A_2$, and a fixed Banach  $(\A_1,\A_2)-$module $\M$. We denote this convention as $(\A_1,\A_2,\M)$. For easier notation, we simplify the indexing and write
$$a_1\cdot_1c\cdot_2a_2\equiv a_1ca_2,\quad a_i\cdot_{\A_i}b_i\equiv a_ib_i,\quad c\in\M,\  a_i,b_i\in\A_i,\quad i=1,2.$$
Similarly, we write $\|\cdot\|$ instead of $\|\cdot\|_{\M}$, or $\|\cdot\|_{\A_i}$, etc. The spectrum of $a_i$ within the respective algebra $\A_i$, $i=1,2$, will be written as $\sigma_{\A_i}(a_i)$, or as $\sigma(a_i)$ if there is no chance of confusion.   The algebra of bounded linear operators over $\M$ is denoted as $\B(\M)$. For a given $L\in\B(\M)$, the image (range) of the operator $L$ is denoted as $\R(L)$. Finally, for any $a_i\in\A_i$, the set $(a_i)'$ denotes the set of elements in $\A_i$ which commute with $a_i$. For more on Banach (and Hilbert $C^*-$) modules consult  \cite{Trap},  \cite{BDI}, \cite{BIT}, \cite{Bel}, \cite{BDZG}, \cite{ZMREMMFM}, and numerous references therein.\\

Accordingly, we assume that $a\in\A_1$, $b\in\A_2$, and $c\in\M$ from \eqref{sylm-} are also provided and fixed, and we focus on inspecting the consistency of \eqref{sylm-}. It is straightforward to see that, even in the Banach module setting $(\A_1,\A_2,\M)$, if $a$ and $b$ have disjoint spectrums then there always exists a unique solution $x\in\M$ corresponding to the given $c\in\M$. Indeed, by introducing the bounded multiplication operators in $\M$ $Ax:=ax$ and $Bx:=xb$, it follows that $\sigma_{\B(\M)}(A)\subset\sigma_{\A_1}(a)$ and  $\sigma_{\B(\M)}(B)\subset\sigma_{\A_2}(b)$, ergo $$\sigma(a)\cap\sigma(b)=\emptyset\Rightarrow \sigma_{\B(\M)}(A)\cap\sigma_{\B(\M)}(B)=\emptyset.$$
Moreover, the operators $A$ and $B$ commute, and can be embedded into the same maximal commutative Banach subalgebra $\B$ of $\B(\M)$ that contains them. Furthermore, the said commutative algebra $\B$ preserves their spectrums. Then, due to the Gel'fand transform (see \cite{RBPR}, \cite{VM}, and \cite{AS}), it follows that
$$0\notin\sigma_{\B(\M)}(A)-\sigma_{\B(\M)}(B)\supset\sigma_{\B}(A-B)=\sigma_{\B(\M)}(A-B).$$
In other words, if $\sigma(a)\cap\sigma(b)=\emptyset$, then for every $c\in\M$ there exists a unique $x\in\M$ such that
$$(A-B)x=c\Leftrightarrow ax-xb=c,$$
i.e., the equation \eqref{sylm-} is \emph{regular}. Keeping the notation consistent with \cite{ECLHDS}, \cite{BD1}, \cite{BDD2}, \cite{BDND1},  and \cite{ASMD}, when $\sigma(a)\cap\sigma(b)\neq\emptyset$, the equation \eqref{sylm-} is said to be \emph{singular}.

Due to the previous discussion, in this paper we focus only on the case when $\sigma(a)\cap\sigma(b)\neq\emptyset$. As opposed to the previously mentioned papers that treat the singular equation, the approach in this paper is completely different: rather than conducting a rigorous spectral analysis for the elements $a$ and $b$, we introduce and study a matrix algebra $\mathcal{M}$ which is generated by $a$, $b$, and $c$. This way, the existence of a particular solution, as well as its uniqueness, are closely related to the existence of a nonprimary square root (and its modifications) of a certain matrix which belongs to the said matrix algebra  $\mathcal{M}.$ Though this idea resembles the Roth's removal rule, the advantage of our matrix algebra lies in the fact that we keep the ``if and only if'' statements, rather than the ``only if'' one. Moreover, Roth's removal theorem does not make much sense when $c=0$, whereas  our results work with an ease in this case (see Section \ref{matrixalgebra}).  By employing the said construction, we reduce the problem to a simple  (and verifiable) consistency criterion for a two-equation system, see Theorem \ref{upmv} at the end of the paper. \\

Throughout this paper, we assume that $(\A_1,\A_2,\M)$ is fixed and provided as above.  Moreover, it is understood that the elements $a\in\A_1$, $b\in\A_2$, and $c\in\M$ are provided as well, such that $\sigma(a)\cap\sigma(b)\neq\emptyset$. By studying this two-sided module setting, we simultaneously enclose the problem in several different types of classical operator algebras and modules: bounded, compact, (semi-)Fredholm, finite-rank, with finite or semi-finite traces, and even some more advanced structures, like the unital quasi *-algebras and non-commutative $\ell_p$ spaces, see e. g. \cite{Trap}, \cite{BDI}, \cite{BIT}, \cite{Bel}, \cite{JB}, \cite{BDZG},  \cite{DS}, \cite{VM}.

\section{Transferring the problem}\label{transfer}

In this section we rephrase the initial problem in the following way. By the spectral mapping theorem, the equation \eqref{sylm-} is mathematically equivalent to
\begin{equation}
\label{sylR}
(a+\lambda 1_{\A_1})x-x(b+\lambda 1_{\A_2})=c,
\end{equation}
for an arbitrary scalar $\lambda$. Thus for any given  angle $\alpha\in(0,\pi/2)$ we can choose $\lambda$ to be a sufficiently large positive number, such that $\sigma(a+\lambda)\cup\sigma(b+\lambda)$ is contained in the sector
$\{z:0\neq z, -\alpha,<\arg z<\alpha\}$. For convenience, for a given $\alpha\in(0,\pi/2)$, we denote that sector as
$$\Lambda_\alpha:=\{z:0\neq z, -\alpha,<\arg z<\alpha\}.$$
When necessary, we will assume that $\sigma(a)\cup\sigma(b)\subset\Lambda_\alpha$, without the loss of generality. Moreover, by $\HR$ we denote the open right complex half-plane with positive real parts:
$$\HR=\{z\in\CC: \Re z>0\}=\bigcup_{0<\alpha<\pi/2}\Lambda_\alpha.$$

\subsection{The regular equation $a\bar{c}+\bar{c}b=c$}

By transforming the equation \eqref{sylm-} into \eqref{sylR} above,  the following analysis immediately follows:

\begin{lemma}\label{expa} Let $a\in\A_1$ be such that $\sigma(a)\in\HR$. Then the function $t\mapsto e^{-ta}$, $t\in[0,+\infty)$, vanishes when $t\to+\infty$.
\end{lemma}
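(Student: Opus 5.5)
Since $\sigma(a)$ is a compact subset of the open half-plane $\HR$, the quantity $\delta:=\min\{\Re z: z\in\sigma(a)\}$ is attained and strictly positive. The aim is to prove the stronger statement $\|e^{-ta}\|\le K e^{-\varepsilon t}$ for some constants $K>0$ and $0<\varepsilon<\delta$. This bound clearly gives $e^{-ta}\to 0$ in $\A_1$ as $t\to+\infty$.

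\textbf{Step 1: a spectral-radius identity.} By the spectral mapping theorem for the entire function $z\mapsto e^{-z}$ (holomorphic functional calculus in the unital Banach algebra $\A_1$),
$$\sigma(e^{-a})=\{e^{-z}:z\in\sigma(a)\}.$$
Consequently
$$r(e^{-a})=\max_{z\in\sigma(a)}e^{-\Re z}=e^{-\delta}<1.$$
More generally, $\sigma(e^{-sa})=e^{-s\sigma(a)}$ for every $s\ge 0$.

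\textbf{Step 2: geometric decay along integers.} Fix $q$ with $e^{-\delta}<q<1$, for instance $q=e^{-\varepsilon}$ with $0<\varepsilon<\delta$. Gelfand's formula gives $\|(e^{-a})^n\|^{1/n}\to e^{-\delta}$. Hence there is $N$ such that $\|e^{-na}\|\le q^n$ for all $n\ge N$. Here we use $e^{-na}=(e^{-a})^n$, which follows from the semigroup law for exponentials of commuting elements, namely multiples of $a$. Enlarging the constant, we obtain $\|e^{-na}\|\le C q^n$ for all $n\ge0$.

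\textbf{Step 3: passing to real $t$.} Write $t=n+s$ with $n=\lfloor t\rfloor$ and $s\in[0,1)$. Then
$$e^{-ta}=e^{-sa}e^{-na}.$$
Submultiplicativity gives
$$\|e^{-ta}\|\le \Big(\sup_{s\in[0,1]}\|e^{-sa}\|\Big)\,C q^n .$$
The supremum is finite, bounded for example by $e^{\|a\|}$ from the power series. Since $q^n\le q^{t-1}$, this yields $\|e^{-ta}\|\le K e^{-\varepsilon t}\to0$.

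The only genuinely nontrivial ingredients are the spectral mapping theorem and Gelfand's spectral radius formula in a general unital Banach algebra. Both are standard, so the main point requiring care is Step 2. There, compactness of $\sigma(a)$ is what makes $\delta>0$, and hence $r(e^{-a})<1$; a merely pointwise condition $\Re z>0$ would not suffice without it. An alternative route is the Riesz–Dunford integral
$$e^{-ta}=\frac{1}{2\pi i}\oint_\Gamma e^{-tz}(z-a)^{-1}\,dz,$$
with $\Gamma\subset\{\Re z>\delta/2\}$ surrounding $\sigma(a)$. It gives directly $\|e^{-ta}\|\le \frac{|\Gamma|}{2\pi}\sup_\Gamma\|(z-a)^{-1}\|\,e^{-t\delta/2}$, and I would use it if a cleaner one-line bound is preferred.
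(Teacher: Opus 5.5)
Your proof is correct, but your main route differs from the paper's. The paper argues directly from the Riesz--Dunford representation $e^{-ta}=\frac{1}{2\pi i}\int_\gamma e^{-tz}(z-a)^{-1}dz$, with a contour $\gamma\subset\HR$ surrounding $\sigma(a)$. It bounds $\|e^{-ta}\|$ by a constant times $\max_{z\in\gamma}e^{-t\Re z}$, where the constant comes from $\sup_{z\in\gamma}\|(z-a)^{-1}\|$ and the length of $\gamma$. That is exactly the alternative you sketch in your last paragraph.

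Your primary argument works differently:
\begin{itemize}
\item the spectral mapping theorem and Gelfand's formula give $r(e^{-a})=e^{-\delta}<1$;
\item hence $\|e^{-na}\|$ decays geometrically along the integers;
\item you then interpolate to real $t$ via $e^{-ta}=e^{-sa}e^{-na}$ and $\sup_{s\in[0,1]}\|e^{-sa}\|\le e^{\|a\|}$.
\end{itemize}

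What your approach buys: it needs no contour construction or resolvent bounds, only the spectral mapping theorem and the spectral radius formula. It is also an instance of a general semigroup fact: any locally bounded semigroup $T(t)$ with $r(T(1))<1$ decays exponentially.

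What the paper's approach buys: it gives a one-step estimate valid for all real $t$ at once, with a constant explicit in the resolvent on $\gamma$.

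Both give decay at any rate $e^{-\varepsilon t}$ with $\varepsilon<\delta$; in the contour version this comes from choosing $\gamma$ inside $\{\Re z>\varepsilon\}$. Your remark that compactness is what yields a positive margin has a counterpart in the paper's proof. There, the step ``$\max_{z\in\gamma}e^{-t\Re z}\to0$'' tacitly uses that $\min_{z\in\gamma}\Re z>0$ by compactness of $\gamma$. The existence of such a $\gamma\subset\HR$ in turn relies on compactness of $\sigma(a)$.
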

\begin{proof}
By the Cauchy integral formula, it follows that
$$e^{-ta}=\frac{1}{2 \pi i}\int_{\gamma} e^{-tz}(z-a)^{-1}dz$$
where $\gamma$ is a contour surrounding $\sigma(a)$, and lies entirely in $\HR $ as well. Then the Cauchy-Schwartz inequality for the dual norms $\|\cdot\|_1$ and $\|\cdot\|_\infty$ gives
$$\|e^{-ta}\|\leq\frac{1}{2\pi}\int_{\gamma}\|(z-a)^{-1}\||dz|\cdot \sup_{z\in\gamma}\left|e^{-tz}\right|.$$
The function 
$$z\mapsto \frac{1}{2\pi}\int_{\gamma}\|(z-a)^{-1}\||dz|$$
is continuous on the compact $\gamma$, therefore, it is uniformly continuous and thus it is bounded by some $C_\gamma>0$. On the other hand, due to $\left|e^{z}\right|=e^{\Re z}$, and $\Re z>0$ for every $z\in\gamma$, we have
$$\|e^{-ta}\|\leq C_\gamma \cdot \max_{z\in \gamma}e^{-t \Re z},$$
which vanishes when $t\to+\infty$.
\end{proof}

\begin{lemma}\label{regeq} Let $a\in\A_1$ and $b\in\A_2$ such that $\sigma(a)\cup\sigma(b)\subset\HR$. Let $c\in\M$ be arbitrary. Then the regular Sylvester equation
\begin{equation}
\label{sylreg}
a\bar{c}+\bar{c}b=c
\end{equation}
has the unique solution $\bar{c}\in\M$ given as
\begin{equation}
\label{sylsolm+}
\bar{c}=\int_0^\infty e^{-ta}ce^{-tb}dt. 
\end{equation}
\end{lemma}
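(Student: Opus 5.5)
The plan is to prove uniqueness first, via the spectral argument from the introduction, and then show that the integral \eqref{sylsolm+} converges and satisfies \eqref{sylreg}.

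\emph{Uniqueness.} Introduce the commuting multiplication operators $Ax:=ax$ and $Bx:=xb$ on $\M$, as in the introduction. Equation \eqref{sylreg} is $(A+B)\bar c=c$, and we have $\sigma_{\B(\M)}(A)\subset\sigma(a)\subset\HR$ and $\sigma_{\B(\M)}(-B)\subset-\sigma(b)$, which lies in the open left half-plane. These two sets are disjoint, so the Gel'fand argument in a maximal commutative subalgebra containing $A$ and $B$ gives $0\notin\sigma(A+B)$. Hence $A+B$ is invertible in $\B(\M)$, and the solution, if it exists, is unique.

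\emph{Convergence of the integral.} I would sharpen Lemma \ref{expa} to exponential decay. Since $\sigma(a)$ and $\sigma(b)$ are compact subsets of $\HR$, choose contours $\gamma_a,\gamma_b$ in $\HR$ and $\delta>0$ with $\Re z\ge\delta$ on both contours. The estimate in the proof of Lemma \ref{expa} then gives
$$\|e^{-ta}\|\le C_a e^{-\delta t},\qquad \|e^{-tb}\|\le C_b e^{-\delta t}.$$
By the submultiplicativity of the module norm, $\|e^{-ta}ce^{-tb}\|\le C_aC_b\|c\|e^{-2\delta t}$. The integrand is continuous in $t$ with values in $\M$, because the exponentials are norm-continuous and the module actions are bounded bilinear maps. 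The integral therefore converges absolutely as a Bochner (or improper Riemann) integral in the Banach space $\M$.

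\emph{Verification.} Set $f(t)=e^{-ta}ce^{-tb}$. Differentiate in norm, using the product rule for the bounded bilinear module actions and $\frac{d}{dt}e^{-ta}=-ae^{-ta}$:
$$f'(t)=-\bigl(af(t)+f(t)b\bigr).$$
Integrate from $0$ to $T$. The maps $x\mapsto ax$ and $x\mapsto xb$ are bounded and linear, so they commute with the integral, which yields
$$a\int_0^T f+\Bigl(\int_0^T f\Bigr)b=f(0)-f(T)=c-e^{-Ta}ce^{-Tb}.$$
Let $T\to\infty$. The last term tends to $0$ by the decay estimate (or by Lemma \ref{expa}), and the left side tends to $a\bar c+\bar c b$ by continuity. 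Hence $\bar c$ solves \eqref{sylreg}.

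The main obstacle is the analytic bookkeeping in the Banach module. One must justify that the vector-valued calculus applies: differentiability of $f$, and that bounded operators pass through the integral. One must also obtain a uniform exponential bound; Lemma \ref{expa} as stated gives only vanishing at infinity, not integrability, so I would reuse its proof with the explicit constant $\delta$ chosen above.
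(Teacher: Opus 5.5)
Your proposal is correct and follows essentially the same route as the paper. Uniqueness comes from the disjointness of $\sigma(a)$ and $\sigma(-b)$ via the multiplication-operator/Gel'fand argument of the introduction. Existence comes from applying the fundamental theorem of calculus to $t\mapsto e^{-ta}ce^{-tb}$ and using that this function vanishes at infinity.

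Your explicit bound $\|e^{-ta}\|\le C_a e^{-\delta t}$, read off from the proof of Lemma \ref{expa}, supplies the absolute convergence that the paper simply calls obvious. Truncating at $T$ and then letting $T\to\infty$ is only a cosmetic variant of the paper's direct Bochner-integral computation.
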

\begin{proof} Since $\sigma(a)\cap\sigma(-b)=\emptyset$, by the previous discussion it follows that the equation \eqref{sylreg} is indeed regular and with a unique solution $\bar{c}$ for a given $c$. On the other hand, it is obvious that the integral in \eqref{sylsolm+} is absolutely convergent, therefore it can be treated as a Bochner integral with respect to the standard scalar Lebesgue measure on $[0,+\infty)$, and as such it subjects to the fundamental theorem for the Bochner integral: in other words, we have 
$$
\begin{aligned}
&a\int_0^\infty e^{-ta}ce^{-tb}dt+\int_0^\infty e^{-ta}ce^{-tb}dt\ b=\\
=&-\int_0^\infty\left( e^{-ta}ce^{-tb}\right)'_t dt=-\left.\left(e^{-ta}ce^{-tb}\right)\right|_{t=0}^{t\to+\infty}=\\
=&c-\lim_{t\to+\infty}e^{-ta}ce^{-tb}=c.
\end{aligned}$$
\end{proof}
The previous result also follows from the multiplication operator analysis: let $A, B\in\B(\M)$ be the appropriate multiplication operators in $\M$, $Ay=ay$ and $By=yb$. Then $A$ and $B$ commute in $\B(\M)$, and  $$\sigma_{\B(\M)}(A)\cup \sigma_{\B(\M)}(B)\subset\sigma_{\A_1}(a)\cup\sigma_{\A_2}(b)\subset\HR.$$
It follows by the virtue of the Gel'fand transform for commutative Banach algebras that 
$$\int_0^\infty e^{-ta}ye^{-tb}dt=\int_0^\infty e^{-tA}e^{-tB}ydt=\int_0^\infty e^{-t(A+B)}dt\ (y)=(A+B)^{-1}y,\quad y\in\M.$$
Therefore, the expression  $e^{-t(A+B)}$ is well-defined in $\B(\M)$, and so is the integral $\int\limits_0^\infty e^{-t(A+B)}dt=(A+B)^{-1}$, by Lemma \ref{expa}. 

\subsection{The equation $az=wb$}

In addition to obtaining the consistency conditions for \eqref{sylm-}, it is important to inspect the uniqueness of the solution. Since the problem is linear, the uniqueness of the solution to \eqref{sylm-} is equivalent to the uniqueness of the solution to the corresponding homogeneous equation (also known as the intertwining problem, see e.g. \cite{JB} and \cite{HM}):
\begin{equation}
\label{sylhom} ax=xb.
\end{equation} 
In other words, if $ax=xb\Rightarrow x=0_\M$, then there is at most one solution to $ax+xb=c$. Indeed, if there were at least two different solutions, say $x_1$ and $x_2$, then $x_1-x_2$ would solve \eqref{sylhom}, which is impossible. Therefore, it is significant to study the equation \eqref{sylhom} as well. As opposed to the matrix case (see \cite{MD}), it is quite simple to construct a singular homogeneous Sylvester equation with the unique trivial solution:

\begin{example} Let $\A_1$ be a noncommutative unital Banach algebra, and let $\A_2$ be any nontrivial unital Banach algebra. Let $\M$ be any nontrivial Banach $(\A_1,\A_2)-$module, i.e., 
$$1_{\A_1}\cdot m=0_\M\Rightarrow m=0_\M,\quad m\cdot 1_{\A_2}=0_\M\Rightarrow m=0_\M.$$ 
For a nonzero scalar $\lambda$, let $b=\lambda 1_{\A_2}$, and let $a$ be any element in $\A_1$ such that the linear pencil $a_\lambda:=a-\lambda1_{\A_1}$ is singular in $\A_1$ (say, let it be a right zero divisor), but in the manner that $a_\lambda$ is not a left zero divisor. Then $\sigma(a)\cap\sigma(b)=\{\lambda\}$ and
$$ax=xb\Leftrightarrow a_\lambda x=0\Rightarrow x=0.$$
\hfill$\clubsuit$
\end{example}
Surprisingly enough, obtaining a particular solution to \eqref{sylR} (equivalently, to \eqref{sylm-}), can be transferred to solving a generalized form of \eqref{sylhom} via the following lemma: 

\begin{lemma}
\label{C1C2Y}
Let $a$ and $b$ be invertible in $\A_1$ and $\A_2$, respectively. The following statements are equivalent:
\begin{itemize}
\item[(a)] There exists a particular solution $x\in\M$ to \eqref{sylm-}.
\item[(b)] There exist $c_1$, $c_2,$ and $y\in\M$, such that $c=c_1-c_2$ and $(c_1+y)b=a(c_2+y)$.
\end{itemize}  
In that case, a particular solution can be chosen to be 
\begin{equation}\label{solparinv}
x:=a^{-1}(c_1+y)\equiv (c_2+y)b^{-1}.\end{equation}
\end{lemma}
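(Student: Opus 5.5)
The plan is to prove the two implications directly by algebraic manipulation in the module $\M$. The only tools needed are the invertibility of $a$ in $\A_1$ and of $b$ in $\A_2$, the associativity of the two-sided action, and the non-triviality convention $1_{\A_1}m=m=m1_{\A_2}$. The last point deserves a remark. The standing assumption says only that $1_{\A_1}\cdot c\neq 0$ for $c\neq 0$. However, $e:=1_{\A_1}\cdot$ is an idempotent operator on $\M$, and $m-1_{\A_1}m$ is annihilated by $1_{\A_1}\cdot$, so non-triviality forces $1_{\A_1}m=m$. The same argument gives $m1_{\A_2}=m$. Hence $a^{-1}(am)=m$ and $(mb)b^{-1}=m$ for all $m\in\M$.

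\textbf{(a)$\Rightarrow$(b).} Let $x$ solve \eqref{sylm-}. The idea is to split $c$ so that both $a x$ and $x b$ appear as translates of a common $y$. I will choose $y:=0_\M$, $c_1:=ax$ and $c_2:=xb$. Then $c_1-c_2=ax-xb=c$ by \eqref{sylm-}. Associativity gives
$$(c_1+y)b=(ax)b=a(xb)=a(c_2+y),$$
so (b) holds. Any other choice $y$ also works: take $c_1:=ax-y$ and $c_2:=xb-y$, whose difference is still $c$, and the same identity holds. This is consistent with the fact that the statement allows a free $y$.

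\textbf{(b)$\Rightarrow$(a).} Assume $c=c_1-c_2$ and $(c_1+y)b=a(c_2+y)$. Define $x:=a^{-1}(c_1+y)$. I will first check that this agrees with $(c_2+y)b^{-1}$, which is the content of \eqref{solparinv}. Multiply the hypothesis on the left by $a^{-1}$ and on the right by $b^{-1}$ and use associativity:
$$a^{-1}(c_1+y)=a^{-1}(c_1+y)bb^{-1}=a^{-1}a(c_2+y)b^{-1}=(c_2+y)b^{-1}.$$
Then
$$ax-xb=a a^{-1}(c_1+y)-(c_2+y)b^{-1}b=(c_1+y)-(c_2+y)=c_1-c_2=c.$$
So $x$ is a particular solution, and it has the stated form.

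There is no real obstacle here. The only delicate point is the identity $1_{\A_i}$-action $=\id_\M$, which is needed to cancel $aa^{-1}$ and $b^{-1}b$ inside the module. I will handle it by the idempotent argument above, or simply cite it as part of the paper's convention that $\M$ is non-trivial.
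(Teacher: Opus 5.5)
Your proposal is correct and follows the paper's proof. You use the same choice $c_1=ax$, $c_2=xb$, $y=0_\M$ for (a)$\Rightarrow$(b), and the same direct check that $x=a^{-1}(c_1+y)=(c_2+y)b^{-1}$ solves the equation for the converse. Your idempotent argument that the units act as the identity on $\M$, which the cancellations $aa^{-1}$ and $b^{-1}b$ need, is a detail the paper leaves implicit.
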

\begin{proof}
$(a)\Rightarrow(b):$ If $x$ is a particular solution, then $c_1:=ax$, $c_2:=xb$, and $y:=0$ (or any solution to the homogeneous equation \eqref{sylhom}). Then $c=ax-xb=c_1-c_2$ and
$$(c_1+y)b=axb+yb=axb+ay=a(c_2+y).$$
$(b)\Rightarrow(a):$ Conversely, by choosing $x$ as in \eqref{solparinv}, it directly follows that it is a solution to \eqref{sylm-}.
\end{proof}
\begin{remark} From $c=c_1-c_2$, it follows that $c_1$ and $c_2$ determine each other uniquely (if one is known then so is the other). On the other hand, the element $y$ is any solution to the homogeneous Sylvester equation \eqref{sylhom}. Given that we seek any particular solution in this lemma, it suffices to choose $y=0$ without the loss of generality. \end{remark}
In order to find a particular solution in the form \eqref{solparinv}, one ought to address the generalized intertwining problem (or the generalized homogeneous Sylvester equation)
\begin{equation}\label{int} az=wb 
\end{equation}
for the unknown pair $(z,w)\in\M\times \M$. The elements $z$ and $w$ are uniquely determined by one another, via the relation $z=a^{-1}wb$. Therefore we are going to analyze the set
\begin{equation}
\label{Omega}\Omega=\{(z,w): az=wb\}.
\end{equation}
Note that its ``diagonal''  is precisely the solution set for the homogeneous Sylvester equation \eqref{sylhom}:
\begin{equation}
\label{Omega0}\Omega_0:=\{(z,z): az=zb\}=\{(z,w)\in\Omega:w-z=0\}.\end{equation}
Moreover, the set $\Omega\setminus\Omega_0$ is nonempty, because for any chosen $z$ there exists (a unique) $w$, given as $w=azb^{-1}$, such that \eqref{int} holds. Within the set $\Omega$, due to Lemma \ref{C1C2Y}, we are interested in the existence of the set
\begin{equation}
\label{OmegaC}\Omega_c=\{(z,w)\in\Omega: w-z=c\}.\end{equation}
\begin{corollary}
With respect to the previous notation, there exists a particular solution to \eqref{sylm-} if and only if $\Omega_c\neq\emptyset$.
\end{corollary}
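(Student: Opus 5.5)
The plan is to read the corollary as a direct reformulation of Lemma \ref{C1C2Y}, under that lemma's standing hypothesis that $a$ and $b$ are invertible in $\A_1$ and $\A_2$. The invertibility costs nothing. By the discussion at the start of Section \ref{transfer}, we may replace $a,b$ by $a+\lambda 1_{\A_1}$ and $b+\lambda 1_{\A_2}$ with $\lambda>0$ large, without changing the solution set of \eqref{sylm-}. Then $\sigma(a)\cup\sigma(b)\subset\Lambda_\alpha\subset\HR$, so $0\notin\sigma(a)\cup\sigma(b)$. One caveat: the set $\Omega$ in \eqref{Omega} is defined with respect to the (shifted) $a,b$ actually in use, so the corollary is understood for those elements.

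For the direction ``$\Omega_c\neq\emptyset\Rightarrow$ solvable'', take $(z,w)\in\Omega_c$, so that $az=wb$ and $w-z=c$. I would set $c_1:=w$, $c_2:=z$ and $y:=0$. Then $c=c_1-c_2$ and $(c_1+y)b=wb=az=a(c_2+y)$, which is condition (b) of Lemma \ref{C1C2Y}, so (a) holds. As a direct check, put $x:=a^{-1}w=zb^{-1}$; these agree because $az=wb$. Then $ax-xb=w-z=c$.

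For the converse, let $x$ solve \eqref{sylm-} and take $c_1,c_2,y$ as produced by Lemma \ref{C1C2Y}(a)$\Rightarrow$(b), for instance $c_1=ax$, $c_2=xb$, $y=0$. Define $w:=c_1+y$ and $z:=c_2+y$. Then $az=a(c_2+y)=(c_1+y)b=wb$, so $(z,w)\in\Omega$, and $w-z=c_1-c_2=c$, so $(z,w)\in\Omega_c$. Concretely this is the pair $(z,w)=(xb,ax)$.

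No step here is a real obstacle: the argument is bookkeeping that identifies the pair $(c_2+y,\,c_1+y)$ of Lemma \ref{C1C2Y} with an element of $\Omega_c$. The only point needing care is the one already flagged, namely keeping the invertibility assumption, and the shift that secures it, explicit, so that $\Omega$ and $\Omega_c$ refer to the same $a,b$ for which \eqref{solparinv} makes sense.
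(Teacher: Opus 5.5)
Your proposal is correct and is exactly the paper's implicit argument. The paper gives no separate proof: the corollary is read off from Lemma \ref{C1C2Y} by identifying $(z,w)=(c_2+y,\,c_1+y)$, with invertibility of $a,b$ kept as a standing hypothesis. Your caveat that $\Omega$ must refer to the shifted elements is well taken, since without invertibility the ``if'' direction fails (e.g.\ $a=b=0$ gives $\Omega_c\neq\emptyset$ for every $c$).
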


\section{The matrix algebra $\mathcal{M}$}\label{matrixalgebra}

For the fixed $(\A_1,\A_2,\M)$, we introduce the following algebraic construction: we assume there exists a complex Banach space $\K$, which is  a Banach $(\A_2,\A_1)-$module (note the reverse order of $\A_1$ and $\A_2$). This is always possible: for any given Banach space $\mathfrak{B}$, choose $\K:=\{0_\mathfrak{B}\}$, and define the respective multiplications as $a_2\cdot0_\mathfrak{B}=0_\mathfrak{B}\cdot a_1=0_\mathfrak{B}$, for every $a_i\in\A_i$, $i=1,2$. In that case, the module $\K$ is said to be \emph{trivial}. This, of course, need not be the only choice for $\K$: it can be provided in a manner that it is a proper nontrivial Banach space. However, it will soon be shown that the choice of $\K$ does not affect our calculations: we can always use just the zero vector from any $\K$ without the loss of the generality.

Nonetheless, the space $\A_1\times \M\times \K\times \A_2$ as such exists, and can be embedded into a $2\times 2$ matrix algebra $\mathcal{M}$, in the way that
$$(a,m,k,b)\mapsto\mat{a}{m}{k}{b}\in\mathcal{M},$$
for every $(a,m,k,b)\in \A_1\times \M\times \K\times \A_2$. Precisely, the algebra $\mathcal{M}$ is equipped with the matrix multiplication $\circ$, defined as
$$\mat{a_1}{m_1}{k_1}{b_1}\circ\mat{a_2}{m_2}{k_2}{b_2}:=\mat{a_1a_2}{a_1m_2+m_1b_2}{k_1a_2+b_1k_2}{b_1b_2},$$
for $(a_i,m_i,k_i,b_i)\in \A_1\times \M\times \K\times \A_2$,  $i=1,2$.  The ordered pair $\left(\mathcal{M},\circ\right)$ represents a semigroup with the unit $1_{\mathcal{M}}:=\mat{1_{\A_1}}{0_{\M}}{0_{\K}}{1_{\A_2}}$, i.e., it is a monoid. When both $a$ and $b$ are invertible in the respective algebras, the inverse of the matrix $\mat{a}{m}{k}{b}$ is given as $\mat{a^{-1}}{-a^{-1}mb^{-1}}{-b^{-1}ka^{-1}}{b^{-1}}$.The set of invertible matrices in $\mathcal{M}$ is denoted as $\mathcal{M}^{-1}$. As before, we will often omit the notation $\circ$ when the matrix multiplication is obvious.

\subsection{(Non)-primary square roots}

Let $a\in\A_1$ and $b\in\A_2$ be given elements in the respective algebras. Denote by $N_0^\pm$ the matrices
\begin{equation}\label{N0}
N^{\pm}_0:=\mat{a}{0}{0}{\pm b},
\end{equation}
and by $N_0^2$:
\begin{equation}
\label{M}N_0^2:=\mat{a^2}{0}{0}{b^2}.\end{equation}
In that sense, there exists the set of square roots of $N_0^2$ in $\mathcal{M}$:
\begin{equation}\label{sqrtM}\sqrt{N_0^2}=\{N\in\mathcal{M}: N^2=N_0^2\}\neq\emptyset,\end{equation}
where $N^2=N\circ N$. 
Since the sets $\sqrt{a^2}$ and $\sqrt{b^2}$ are nonempty, where
$$\sqrt{a^2}=\{\xi\in\A_1: \xi^2=a^2\},\ \sqrt{b^2}=\{\eta\in\A_2: \eta^2=b^2\},$$
we say that $N$ is a \emph{primary} square root of $N_0^2$ if $N$ is contained in the set
$$\left\{\mat{\xi}{0}{0}{\eta}:\  \xi\in\sqrt{a^2}, \ \eta\in\sqrt{b^2}\right\},$$
and is \emph{nonprimary} otherwise. We define the set:
\begin{equation}
\label{sqrtM'}\left(\sqrt{N_0^2}\right)^{\sim}_{-}:=\left\{N\in\sqrt{N_0^2}: N\textrm{ is similar to }N_0^-\right\},
\end{equation}
with ``similarity'' being understood in the sense that there exists an invertible matrix $U\in\mathcal{M}^{-1}$ such that 
$N=U\circ N_0^- \circ U^{-1}$. Since it contains $N^{-}_0$, the set $\left(\sqrt{N_0^2}\right)^{\sim}_{-}$ is nonempty.

\subsection{Coupled homogeneous equations}

For given elements $a\in\A_1$ and $b\in\A_2$, assume the matrices $N_0^\pm$ and $N_0^2$ are defined as \eqref{N0} and \eqref{M}, respectively. Since these block-diagonal matrices are symmetric with respect to the entities $a$ and $b$, it is convenient to, in addition to the equation \eqref{sylhom}, consider the adjoint equation in $\K$:
 \begin{equation}\label{sylhomadj}
 by=ya
 \end{equation}
 for the unknown $y\in\K$. The equation \eqref{sylhomadj} is solvable for $y=0_\mathfrak{K}$, regardless of whether $\K$ is trivial or not. Therefore, we are going to study the coupled system of the homogeneous equations
  \begin{equation}
  \label{couplhom}
  \begin{cases} ax=xb,\\by=yb
  \end{cases}
  \end{equation}
  and we are going to inspect the existence of the solution $(x,y)\neq(0_\M,0_\K)$. As opposed to the matrix case (see e.g. \cite{MD}), it is quite possible to have a system \eqref{couplhom}, in which one of the equations has only the trivial solution, while the other one allows nontrivial solutions.

\begin{lemma}\label{homsylsqrtM} Let $a$ and $b$ be invertible in their respective algebras, such that $\sigma(a)\cap\sigma(b)\neq\emptyset$, and $\sigma(a)\cup\sigma(b)\subset\HR$. Then:
\begin{itemize}
\item[(a)] Every solution $x$ to \eqref{sylhom} (resp. every solution $y$ to \eqref{sylhomadj}) defines a similarity matrix $U=\mat{u_1}{u_2}{u_3}{u_4}$, such that $u_2$ solves \eqref{sylhom}, $u_3$ solves \eqref{sylhomadj}, and $U^{-1}N^-_0U\in\left(\sqrt{N_0^2}\right)^{\sim}_{-}$. Moreover, if $(x)\neq 0_\M$, (resp. if $y\neq 0_\K$), then the said similarity matrix $U$ can be chosen such that $U^{-1}N^-_0U\in\left(\sqrt{N_0^2}\right)^{\sim}_{-}\setminus\{N_0^-\}$.
\item[(b)] Let $N\in\left(\sqrt{N_0^2}\right)^{\sim}_{-}$, and let $U=\mat{u_1}{u_2}{u_3}{u_4}$ be an invertible matrix such that $UNU^{-1}=N^-_0$. Then $u_2$ solves \eqref{sylhom} while $u_3$ solves \eqref{sylhomadj}.
\end{itemize}
\end{lemma}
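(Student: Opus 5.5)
The plan is to handle part (a) by an explicit unipotent similarity, and part (b) by extracting commutation relations with $N_0^2$ and then taking principal square roots through the holomorphic functional calculus. The hypothesis $\sigma(a)\cup\sigma(b)\subset\HR$ is used only in (b).

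\textbf{Part (a).} Let $x\in\M$ solve \eqref{sylhom}. I take $U=\mat{1_{\A_1}}{x}{0_\K}{1_{\A_2}}$, which is invertible with $U^{-1}=\mat{1_{\A_1}}{-x}{0_\K}{1_{\A_2}}$. Here $u_2=x$ solves \eqref{sylhom} and $u_3=0_\K$ solves \eqref{sylhomadj} trivially. A direct computation should give
$$U^{-1}N_0^-U=\mat{a}{ax+xb}{0}{-b}=\mat{a}{2ax}{0}{-b}.$$
Squaring this, the off-diagonal entry becomes $2a(ax-xb)=0$, so the matrix lies in $\sqrt{N_0^2}$. It is similar to $N_0^-$ by construction, hence it lies in $\left(\sqrt{N_0^2}\right)^{\sim}_{-}$.

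If $x\neq 0_\M$, then $2ax\neq 0$: otherwise $1_{\A_1}x=a^{-1}(ax)=0$, and the nontriviality of the module gives $x=0$. So the conjugate differs from $N_0^-$. The case of a solution $y\in\K$ of \eqref{sylhomadj} is symmetric: use $U=\mat{1}{0}{y}{1}$, whose conjugate has lower-left entry $-(by+ya)=-2by$.

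\textbf{Part (b).} From $UNU^{-1}=N_0^-$ I get $N=U^{-1}N_0^-U$, so
$$N_0^2=N^2=U^{-1}(N_0^-)^2U=U^{-1}N_0^2U,$$
since $(N_0^-)^2=N_0^2$. Hence $N_0^2U=UN_0^2$. Reading off the off-diagonal entries gives
$$a^2u_2=u_2b^2,\qquad b^2u_3=u_3a^2.$$

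The remaining step, which I expect to be the main obstacle, is to pass from squares back to $a$ and $b$. Because $\sigma(a),\sigma(b)\subset\HR$, the spectra of $a^2$ and $b^2$ avoid $(-\infty,0]$. So the principal square root $f(z)=\sqrt z$ is holomorphic on a neighbourhood of $\sigma(a^2)\cup\sigma(b^2)$, and the spectral mapping theorem together with uniqueness of the holomorphic square root with spectrum in $\HR$ gives $f(a^2)=a$ and $f(b^2)=b$.

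To transfer this to $u_2$, I introduce the commuting multiplication operators $L,R\in\B(\M)$ given by $Lm=a^2m$ and $Rm=mb^2$. Their spectra satisfy $\sigma(L)\subset\sigma(a^2)$ and $\sigma(R)\subset\sigma(b^2)$, and $(z-L)^{-1}m=(z-a^2)^{-1}m$, and similarly for $R$. From $Lu_2=Ru_2$ we get $(z-L)u_2=(z-R)u_2$. Since the resolvents commute, this yields
$$(z-L)^{-1}u_2=(z-R)^{-1}u_2$$
for $z$ outside both spectra. Integrating $\frac{1}{2\pi i}\int_\gamma \sqrt z\,(\cdot)\,dz$ over a contour $\gamma$ in $\HR$ surrounding $\sigma(a^2)\cup\sigma(b^2)$ then gives $f(L)u_2=f(R)u_2$, that is, $au_2=u_2b$. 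The same argument with left multiplication by $b^2$ and right multiplication by $a^2$ on $\K$ gives $bu_3=u_3a$.
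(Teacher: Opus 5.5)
Your proof is correct, but both parts follow a different route from the paper.

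\textbf{Part (a).} The paper first solves the regular equation $a\bar x+\bar x b=x$ by $\bar x=\int_0^\infty e^{-ta}xe^{-tb}\,dt$ and notes $a\bar x=\bar x b$. It then conjugates with $\mat{1_{\A_1}}{\bar x}{0}{1_{\A_2}}$. This yields the square root $N_x=\mat{a}{x}{0}{-b}$, whose corner is exactly $x$, so $N_x\neq N_0^-$ is immediate. You conjugate with $\mat{1_{\A_1}}{x}{0}{1_{\A_2}}$ itself and get $\mat{a}{2ax}{0}{-b}$. This avoids the integral and the $\HR$ hypothesis entirely. The price is that you use invertibility of $a$ and the nontriviality of the module to see $2ax\neq 0$. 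In the $y$-case you also need $1_{\A_2}y\neq 0$ for $y\neq 0$ in $\K$, which the paper assumes only implicitly, through $1_{\mathcal M}$ being a unit.

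\textbf{Part (b).} Both arguments start from $UN_0^2=N_0^2U$. The paper then puts $v:=au_2-u_2b$ and observes $0=a^2u_2-u_2b^2=av+vb$, so $v=0$ by uniqueness in the regular equation of Lemma~\ref{regeq}. In operator terms, this is $A^2-B^2=(A+B)(A-B)$ with $A+B$ injective. That argument is purely algebraic and reuses a result already proved.

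Your functional-calculus argument is valid and gives more: $h(a^2)u_2=u_2h(b^2)$ for every $h$ holomorphic near $\sigma(a^2)\cup\sigma(b^2)$. It does rely on the composition rule to get $\sqrt{a^2}=a$. It also needs one correction: the contour cannot in general be placed in $\HR$, because $\sigma(a^2)$ may leave $\HR$ (if $1+2i\in\sigma(a)$ then $-3+4i\in\sigma(a^2)$). The contour must lie in $\CC\setminus(-\infty,0]$, where, as you observed earlier, $\sqrt z$ is holomorphic.
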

\begin{proof}
\begin{itemize}
\item[(a)] Let $x$ be a given solution to the respective equation $ax=xb$. Denote by $N_x:=\mat{a}{x}{0}{-b}$. Then there exists a unique $\bar x$ such that $a\bar x+\bar xb=x$,   given via \eqref{sylsolm+}. This implies that
$$a\bar x=\int_0^{+\infty}e^{-ta}axe^{-tb}dt=\int_0^{+\infty}e^{-ta}xbe^{-tb}dt=\bar x b.$$ On the other hand, Roth's removal rule implies that
$$N_x=\mat{1_{\A_1}}{-\bar x}{0}{1_{\A_2}}\mat{a}{0}{0}{-b}\mat{1_{\A_1}}{\bar x}{0}{1_{\A_2}},$$
and
$$\left(N_x\right)^2=\mat{a}{x}{0}{-b}\mat{a}{x}{0}{-b}=\mat{a^2}{ax-xb}{0}{b^2}=N_0^2.$$
Therefore $N_x\in\left(\sqrt{N_0^2}\right)^{\sim}_{-}$, and $\bar x$ solves the equation \eqref{sylhom} while $0_\K$ is a solution to \eqref{sylhomadj}. Specially, if $x\neq0$, then $\bar x\neq0$ and $N_x\neq N_0^-$. Similarly, by choosing 
$$\ _yN:=\mat{a}{0}{y}{-b}$$
the same method applies and gives the similarity matrices $\mat{1_{\A_1}}{0}{\pm\bar y}{1_{\A_2}}$, where $b\bar y+\bar ya=y$ and $\bar y$ is determined via \eqref{sylsolm+}, ergo, $b\bar{y}=\bar{y}a$.
\item[(b)] Conversely, let $N$ be a square root of $N_0^2$ which is similar to $N^-_0$. Then there exists an invertible matrix $U=\mat{u_1}{u_2}{u_3}{u_4}$, such that $N=U^{-1}N^-_0U$. This gives
$$N_0^2=N^2=U^{-1}(N^-_0)^2U\Leftrightarrow UN_0^2=N_0^2U.$$
The latter implies that 
$u_2b^2=a^2u_2$ and $b^2u_3=u_3a^2$. Let $au_2=u_2b+v$, for some $v\in\M$. Then $a^2u_2=au_2b+av$, and similarly, from $u_2b=au_2-v$ it follows that $u_2b^2=(au_2-v)b$. Equating the two gives $$a^2u_2-u_2b^2=au_2b+av-au_2b+vb=av+vb=0,$$ which implies that $v=0$, since $\sigma(a)\cup\sigma(b)\subset\HR$. Analogously, it holds that $u_3a=bu_3$.
\end{itemize}
\end{proof}

\begin{theorem} \label{0,0} Let $a$ and $b$ be such that $\sigma(a)\cap\sigma(b)\neq\emptyset$, and $\sigma(a)\cup\sigma(b)\subset\HR$. The following statements are equivalent:
\begin{itemize}
\item[(a)] There exists a pair $(x,y)\in \M\times\K\setminus\{(0_\M,0_\K)\}$ which solves the system \eqref{couplhom}.
\item[(b)] There exists a nonprimary square root for $N_0^2$ which is similar to $N_0^-$.
\end{itemize}
\end{theorem}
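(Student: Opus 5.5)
The plan is to prove the two implications separately, relying mostly on Lemma \ref{homsylsqrtM}. First a preliminary remark: since $\sigma(a)\cup\sigma(b)\subset\HR$, we have $0\notin\sigma(a)\cup\sigma(b)$. Hence $a$ and $b$ are invertible, and the hypotheses of Lemma \ref{homsylsqrtM} are satisfied. The second equation in \eqref{couplhom} is read as the adjoint equation \eqref{sylhomadj}, $by=ya$. Throughout, \emph{nonprimary} means, concretely, not block-diagonal with diagonal entries in $\sqrt{a^2}$ and $\sqrt{b^2}$.

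For (a)$\Rightarrow$(b), take a nontrivial pair $(x,y)$ solving \eqref{couplhom}.
\begin{itemize}
\item If $x\neq 0_\M$, set $N_x=\mat{a}{x}{0}{-b}$. As computed in the proof of Lemma \ref{homsylsqrtM}(a), $N_x^2=\mat{a^2}{ax-xb}{0}{b^2}=N_0^2$ because $ax=xb$. The same lemma shows $N_x$ is similar to $N_0^-$ via $\mat{1_{\A_1}}{\bar x}{0}{1_{\A_2}}$, where $\bar x$ is given by \eqref{sylsolm+}.
\item Since the off-diagonal entry of $N_x$ is $x\neq 0_\M$, $N_x$ is not block-diagonal, so it is a nonprimary element of $\left(\sqrt{N_0^2}\right)^{\sim}_{-}$.
\item If instead $x=0_\M$, then $y\neq 0_\K$, and the same argument applies to $\ _yN=\mat{a}{0}{y}{-b}$. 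Here $(\ _yN)^2$ has lower-left entry $ya-by=0$.
\end{itemize}

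For (b)$\Rightarrow$(a), let $N$ be a nonprimary square root of $N_0^2$ and $U=\mat{u_1}{u_2}{u_3}{u_4}\in\mathcal{M}^{-1}$ with $UNU^{-1}=N_0^-$. By Lemma \ref{homsylsqrtM}(b), $u_2$ solves \eqref{sylhom} and $u_3$ solves \eqref{sylhomadj}, so $(u_2,u_3)$ solves \eqref{couplhom}. It remains to show $(u_2,u_3)\neq(0_\M,0_\K)$, which I would argue by contradiction.
\begin{itemize}
\item Suppose $u_2=0$ and $u_3=0$, so $U=\mat{u_1}{0}{0}{u_4}$.
\item Write $U^{-1}=\mat{v_1}{v_2}{v_3}{v_4}$ and compare the diagonal entries of $UU^{-1}=U^{-1}U=1_{\mathcal{M}}$. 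The $(1,1)$ entries give $u_1v_1=1_{\A_1}$ and $v_1u_1=1_{\A_1}$, and the $(2,2)$ entries give the same for $u_4,v_4$. Thus $u_1$ and $u_4$ are invertible in $\A_1$ and $\A_2$.
\item The $(1,2)$ entry of $U^{-1}U$ is $v_2u_4=0$, so $v_2=0$, and similarly $v_3=0$. Hence $U^{-1}=\mat{u_1^{-1}}{0}{0}{u_4^{-1}}$.
\item Then $N=U^{-1}N_0^-U=\mat{u_1^{-1}au_1}{0}{0}{-u_4^{-1}bu_4}$. Its diagonal entries satisfy $(u_1^{-1}au_1)^2=u_1^{-1}a^2u_1$, which equals $a^2$ because $N^2=N_0^2$ forces it. The same holds for the other entry with $b^2$.
\item So $N$ is primary, contradicting the choice of $N$. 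Therefore at least one of $u_2,u_3$ is nonzero.
\end{itemize}

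The main obstacle I anticipate is in (b)$\Rightarrow$(a): making rigorous that a block-diagonal invertible $U$ in the monoid $\mathcal{M}$ has invertible diagonal blocks and a block-diagonal inverse. The step-by-step entry comparison above settles this. A second point to check is that the direction of similarity in Lemma \ref{homsylsqrtM}(b), namely $UNU^{-1}=N_0^-$, matches the way $N$ is given in (b). If it does not, replace $U$ by $U^{-1}$; the lemma's argument only uses that $U$ commutes with $N_0^2$, so it applies to either convention.
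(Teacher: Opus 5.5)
Your proposal is correct and is essentially the paper's own proof. For (a)$\Rightarrow$(b) you use the matrices $N_x$ and ${}_yN$ from Lemma \ref{homsylsqrtM}(a), and for (b)$\Rightarrow$(a) you combine Lemma \ref{homsylsqrtM}(b) with the observation that $u_2=u_3=0$ would make $N$ block-diagonal and hence primary; your entrywise check of the block-diagonal inverse and your remark that only $UN_0^2=N_0^2U$ matters (so the direction of similarity is irrelevant) just make explicit what the paper leaves implicit.
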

\begin{proof} $(a)\Rightarrow(b):$ If $x$ is a nonzero solution to \eqref{sylhom}, then the matrix $N_x=\mat{a}{x}{0}{-b}$ from the previous Lemma \ref{homsylsqrtM} provides a nonprimary square root for $N_0^2$. If $x=0$ and $y$ is a nonzero solution to \eqref{sylhomadj}, then $\ _yN=\mat{a}{0}{y}{-b}$ from the same Lemma provides a nonprimary square root for $N_0^2$. 

$(b)\Rightarrow(a):$ Conversely, let $N$ be a nonprimary square root of $N_0^2$, which is similar to $N_0^-$. Then there exists an invertible matrix $U$ such that $N=UN_0^{-}U^{-1}$. If $u_{12}=0$ and $u_{21}=0$, then $N$ would be block-diagonal, i.e., it would be a primary square root of $N_0^2$, which is not the case. Therefore at least one entry out of $u_{12}$, $u_{21}$ is not zero. Then Lemma \ref{homsylsqrtM} $(b)$ applies and the proof is complete. 
\end{proof}

\subsection{The  subalgebra $\mathcal{M}_0$:\\ Solving the homogeneous equation}
As previously stated, one can choose $\K$ to be trivial, that is, $\K=0_{\K}$. Thus we introduce the following set in $\mathcal{M}$:
\begin{equation}\label{M0}\mathcal{M}_0=\left\{\mat{u_1}{u_2}{0}{u_4}: (u_1,u_2,u_4)\in\A_1\times\M\times\A_2,\  u_1a=au_1,\  bu_4=u_4b\right\}.\end{equation}
It is obvious that $1_{\mathcal{M}},\ N^{\pm}_0, \ N_0^2\in\mathcal{M}_0$.  It is not difficult to see that  $\mathcal{M}_0$ is a submonoind of $\mathcal{M}$, however, it is not abelian. 
\begin{lemma}\label{n0+} Let $a\in\A_1$ and $b\in\A_2$ be given not necessarily invertible elements, such that $\sigma(a)\cap\sigma(b)\neq\emptyset$. A matrix $\mat{u_1}{u_2}{0}{u_4}$ from $\mathcal{M}_0$ commutes with $N_0^+$, if and only if $u_{2}$ solves the homogeneous Sylvester equation \eqref{sylhom}.
\end{lemma}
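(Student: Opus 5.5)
The argument is a direct entrywise computation of the two products $U N_0^+$ and $N_0^+ U$, where $U=\mat{u_1}{u_2}{0}{u_4}\in\mathcal{M}_0$. No spectral hypothesis is needed, so the assumption $\sigma(a)\cap\sigma(b)\neq\emptyset$ plays no role beyond fixing the setting, and invertibility of $a,b$ is never used.

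First, using the multiplication $\circ$ defined in Section \ref{matrixalgebra}, we compute
$$U\circ N_0^+=\mat{u_1}{u_2}{0}{u_4}\mat{a}{0}{0}{b}=\mat{u_1a}{u_2b}{0}{u_4b},\qquad N_0^+\circ U=\mat{a}{0}{0}{b}\mat{u_1}{u_2}{0}{u_4}=\mat{au_1}{au_2}{0}{bu_4}.$$
The $(2,1)$ entries are both $0_\K$, since the $\K$-entries of both factors vanish. Second, we compare the remaining entries. The $(1,1)$ entries agree because $u_1a=au_1$, which is part of the definition \eqref{M0} of $\mathcal{M}_0$. Likewise, the $(2,2)$ entries agree because $u_4b=bu_4$.

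Hence $U N_0^+=N_0^+U$ holds if and only if the $(1,2)$ entries coincide, that is, $u_2b=au_2$. This says exactly that $u_2$ solves \eqref{sylhom}. Both implications are read off from the same entrywise comparison, since equality in $\mathcal{M}$ means equality of all four entries in $\A_1\times\M\times\K\times\A_2$.

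The only point needing care is the justification that matrices in $\mathcal{M}$ are equal exactly when their entries are equal. This holds because of the identification $(a,m,k,b)\mapsto\mat{a}{m}{k}{b}$ from Section \ref{matrixalgebra}. I do not expect any genuine obstacle.
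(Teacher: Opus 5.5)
Your proposal is correct and matches the paper, whose proof is just ``This follows directly'': you compute $U\circ N_0^+$ and $N_0^+\circ U$ entrywise. The diagonal entries agree by the defining commutation conditions of $\mathcal{M}_0$, so commutation reduces to the $(1,2)$ entry, which gives $au_2=u_2b$.
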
 
\begin{proof} This follows directly. \end{proof}
\noindent Combining the previous results, we obtain the following:
\begin{theorem}\label{main1} Let $a$ and $b$ be given elements from $\A_1$ and $\A_2$, respectively, such that $\sigma(a)\cap\sigma(b)\neq\emptyset$, and  $\sigma(a)\cup\sigma(b)\subset\HR$. The following statements are equivalent:
\begin{itemize}
\item[(a)] There exists a nonzero solution to \eqref{sylhom}.
\item[(b)] There exists a nonprimary square root for $N_0^2$ similar to $N_0^-$.
\item[(c)] There exists a matrix $U\in\mathcal{M}_0$ which is not block-diagonal, such that it commutes with $N_0^+$.
\end{itemize}
\end{theorem}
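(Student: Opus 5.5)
The plan is to prove the cycle (a)$\Rightarrow$(b)$\Rightarrow$(a) and (a)$\Leftrightarrow$(c), working throughout with the trivial module $\K=\{0_\K\}$, as the paper allows. With that choice the adjoint equation \eqref{sylhomadj} has only the solution $0_\K$, so the system \eqref{couplhom} has a nontrivial solution exactly when \eqref{sylhom} has a nonzero solution.

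\textbf{(a)$\Leftrightarrow$(b).} Under this reduction, Theorem \ref{0,0} directly gives the equivalence of (a) and (b).

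There is a subtle point here. Theorem \ref{0,0}, and the underlying Lemma \ref{homsylsqrtM}(b), were stated for invertible $a,b$. Invertibility is in fact automatic: $\sigma(a)\cup\sigma(b)\subset\HR$ gives $0\notin\sigma(a)\cup\sigma(b)$.

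A second point is that, for the direction (b)$\Rightarrow$(a), we must make sure the nonzero off-diagonal entry of the similarity matrix lands in $\M$ rather than in $\K$. Since $\K$ is trivial, the $(2,1)$-entry $u_3$ of $U$ is forced to be $0$. So if $N$ is nonprimary, then $u_2\neq0$. By Lemma \ref{homsylsqrtM}(b), $au_2=u_2b$, which is the required nonzero solution of \eqref{sylhom}.

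\textbf{(a)$\Rightarrow$(c).} Given $x\neq0$ with $ax=xb$, take
$$U=\mat{1_{\A_1}}{x}{0}{1_{\A_2}}.$$
Its diagonal entries trivially commute with $a$ and $b$, so $U\in\mathcal{M}_0$. It is not block-diagonal because $x\neq0$. By Lemma \ref{n0+}, $U$ commutes with $N_0^+$, since its off-diagonal entry solves \eqref{sylhom}.

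\textbf{(c)$\Rightarrow$(a).} Let $U=\mat{u_1}{u_2}{0}{u_4}\in\mathcal{M}_0$ commute with $N_0^+$ and not be block-diagonal. Because the $(2,1)$-entry of every element of $\mathcal{M}_0$ is $0$, failing to be block-diagonal means $u_2\neq0$. Lemma \ref{n0+} then gives $au_2=u_2b$, so $u_2$ is a nonzero solution of \eqref{sylhom}.

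The main obstacle is the bookkeeping in (b)$\Rightarrow$(a). For a general nontrivial $\K$, a nonprimary root might come only from a nonzero $u_3\in\K$, which says nothing about \eqref{sylhom}. The argument depends on first fixing $\K$ trivial, as the paper's remark allows, and checking that both the set $\left(\sqrt{N_0^2}\right)^{\sim}_{-}$ and the notion of primary square root are taken within that $\mathcal{M}$. Everything else is a direct citation of Lemma \ref{homsylsqrtM}, Theorem \ref{0,0} and Lemma \ref{n0+}.
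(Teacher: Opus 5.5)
Your proposal is correct and follows the paper's proof, which combines Theorem \ref{0,0} for (a)$\Leftrightarrow$(b) with Lemma \ref{n0+} for (a)$\Leftrightarrow$(c). Your explicit choice of the trivial module $\K$ makes precise what the paper leaves implicit at the start of that subsection, and it is genuinely needed for (b)$\Rightarrow$(a): with a nontrivial $\K$, a nonzero solution $y$ of \eqref{sylhomadj} alone already yields the nonprimary root $\mat{a}{0}{y}{-b}$, even when \eqref{sylhom} has no nonzero solution.
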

\begin{proof} Follows immediately from Theorem \ref{0,0} and Lemma \ref{n0+}.
\end{proof}

\begin{example} Let $h\in L^2(\RR)$ be a given nonzero function, and let $\widehat{h}\in L^2(\RR)$ be its Fourier transform. Denote by $C_h$ the convolution operator $C_h(f):=f*h$ in $L^2(\RR)$, and by $\mu_{\widehat{h}}$ the multiplication operator $\mu_{\widehat{h}}(f):=\widehat{h}f$ in $L^2(\RR)$. It is clear that the two bounded operators $\mu_{\widehat{h}}$ and $C_h$ are unitarily equivalent in $\B(L^2(\RR))$ by the virtue of the Fourier transform $\mathcal{F}\in\B(L^2(\RR))$:
$$\left(\mu_{\widehat{h}}\ \mathcal{F}\right)(f)=\mathcal{F}\left(C_h(f)\right)=\left(\mathcal{F} \ C_h\right)(f), \quad f\in L^2(\RR).$$ 
Therefore by Theorem \ref{main1} the operator matrix $N_0^2:=\mat{\mu^2_{\widehat{h}}}{0}{0}{C^2_h}$ has a nonprimary square root which is similar to $N_0^-:=\mat{\mu_{\widehat{h}}}{0}{0}{-C_h}$. Indeed, the matrix
$$N_{\mathcal{F}}:=\mat{\mu_{\widehat{h}}}{\mathcal{F}}{0}{-C_h}$$
is a nonprimary square root for $N_0^2$, similar to the matrix $N_0^-$ by the means of the similarity matrix
$$U_{\pm}=\mat{\id}{\pm\overline{\mathcal{F}}}{0}{\id},$$
where
$$\overline{\mathcal{F}}:=\int_0^{+\infty}e^{-t\mu_{\widehat{h}}}\mathcal{F}e^{-tC_h}dt,$$
and $\id$ is the identity operator on $L^2(\RR)$, that is, the unit in $\B(L^2(\RR))$. 

There can exist some other nonprimary square roots of $N_0^2$, depending on the choice for $h$: suppose that the function $h$ is chosen in the manner that
$$\int_{-\infty}^{+\infty} \sin (\xi t)h(\xi)d\xi=0, \quad \forall t\in\RR.$$
For instance, it suffices to let $h$ be an even function. Then we have
$$\mathcal{F}^{-1}(h)(t)=\frac{1}{\sqrt{2\pi}}\int_{-\infty}^{+\infty} e^{it\xi}h(\xi)d\xi=\frac{1}{\sqrt{2\pi}}\int_{-\infty}^{+\infty} e^{-it\xi}h(\xi)d\xi=\mathcal{F}(h)(t)=\widehat{h}(t),$$
which implies
$$\mathcal{F}^{-1}\left(h* f\right)=\mathcal{F}^{-1}(h)\cdot \mathcal{F}^{-1}(f)=\widehat{h}\cdot \mathcal{F}^{-1}(f),$$
i.e.
$$\left(\mathcal{F}^{-1}C_h - \mu_{\widehat{h}}\mathcal{F}^{-1}\right)(f)=0, \quad f\in L^2(\RR).$$
Since $h\neq0$ it follows that $\mathcal{F}^{-1}C_h\neq -\mu_{\widehat{h}}\mathcal{F}^{-1}$, and  
$$\begin{aligned}
&\mu_{\widehat{h}}^2\mathcal{F}^{-1}-\mathcal{F}^{-1}C_h^2=\mu_{\widehat{h}}^2\mathcal{F}^{-1}-\mathcal{F}^{-1}C_h^2-\mu_{\widehat{h}}\mathcal{F}^{-1}C_h+\mu_{\widehat{h}}\mathcal{F}^{-1}C_h=\\
=&\mu_{\widehat{h}}(\mu_{\widehat{h}}\mathcal{F}^{-1}-\mathcal{F}^{-1}C_h)+(\mu_{\widehat{h}}\mathcal{F}^{-1}-\mathcal{F}^{-1}C_h)C_h
=0.
\end{aligned}$$
Denote by 
$$N_{\left(\mu_{\widehat{h}}\mathcal{F}^{-1}+\mathcal{F}^{-1}C_h\right)}:=\mat{\mu_{\widehat{h}}}{\mu_{\widehat{h}}\mathcal{F}^{-1}+\mathcal{F}^{-1}C_h}{0}{-C_h}.$$
We have
$$\begin{aligned}
&\mat{\mu_{\widehat{h}}}{\mathcal{F}^{-1}C_h+\mu_{\widehat{h}}\mathcal{F}^{-1}}{0}{-C_h}\mat{\mu_{\widehat{h}}}{\mathcal{F}^{-1}C_h+\mu_{\widehat{h}}\mathcal{F}^{-1}}{0}{-C_h}=\\
=&\mat{\mu_{\widehat{h}}^2}{\mu_{\widehat{h}}^2\mathcal{F}^{-1}-\mathcal{F}^{-1}C_h^2}{0}{C_h^2}=\mat{\mu_{\widehat{h}}^2}{0}{0}{C_h^2},
\end{aligned}$$
ergo  $\mat{\mu_{\widehat{h}}}{\mathcal{F}^{-1}C_h+\mu_{\widehat{h}}\mathcal{F}^{-1}}{0}{-C_h}$ is a nonprimary square root of $N_0^2$. It also holds that
$$\mat{\id}{-\mathcal{F}^{-1}}{0}{\id}\mat{\mu_{\widehat{h}}}{0}{0}{-C_h}\mat{\id}{\mathcal{F}^{-1}}{0}{\id}=\mat{\mu_{\widehat{h}}}{\mu_{\widehat{h}}\mathcal{F}^{-1}+\mathcal{F}^{-1}C_h}{0}{-C_h},$$
thus concluding that $N_{\left(\mu_{\widehat{h}}\mathcal{F}^{-1}+\mathcal{F}^{-1}C_h\right)}$ and $N_0^-$ are indeed similar.
Also note that
$$\mat{\mu_{\widehat{h}}}{0}{0}{C_h}\mat{\id}{\mathcal{F}}{0}{\id}-\mat{\id}{\mathcal{F}}{0}{\id}\mat{\mu_{\widehat{h}}}{0}{0}{C_h}=\mat{0}{0}{0}{0},$$
so $\mat{\id}{\mathcal{F}}{0}{\id}$ commutes with $N_0^+=\mat{\mu_{\widehat{h}}}{0}{0}{C_h}$.
\  \hfill $\clubsuit$
\end{example}

\subsection{The group algebra $\mathcal{M}_0^{-1}$:\\Solving the inhomogeneous equation }
In analogy with the previous subsection, below we obtain the ``if and only if'' conditions for the consistency of \eqref{sylm-} in terms of the matrix algebra $\mathcal{M}_0^{-1}$. 

For given elements $a\in\A_1$ and $b\in\A_2$, let $N_0^\pm$ and $N_0^2$ be respectively provided via \eqref{N0} and \eqref{M}. Note that for any $N$ which is similar to $N_0^-$, there exists an invertible $U\in\mathcal{M}^{-1}$, such that $U^{-1}N_0^- U=N$. We introduce the following structure:
\begin{equation}\label{M0-1}\mathcal{M}^{-1}_0=\left\{\mat{u_1}{u_2}{0}{u_4}\in\mathcal{M}_0: \ u_1, u_4-\textrm{invertible}\right\}.\end{equation}
It is straightforward to see that $1_{\mathcal{M}}\in\mathcal{M}^{-1}_0$, while $N^\pm_0, N_0^2\in\mathcal{M}_0^{-1}$ if
 and only if $a$ and $b$ are invertible in the respective algebras. Moreover, notice that for any $U\in\mathcal{M}^{-1}_0$, where $U=\mat{u_1}{u_2}{0}{u_4}$, the matrix $U$ itself is invertible in $\mathcal{M}_0$, and its inverse is given as
$$U^{-1}=\mat{u_1^{-1}}{-u_1^{-1}u_2 u_4^{-1}}{0}{u_4^{-1}},$$
therefore $U^{-1}\in\mathcal{M}^{-1}_0$ as well. This proves the following:
\begin{proposition} With respect to the previous notation, the ordered triplet $\left(\mathcal{M}^{-1}_0,\circ,1_{\mathcal{M}}\right)$ is a group with the unit $1_{\mathcal{M}}$.
\end{proposition}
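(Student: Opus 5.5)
Although the text before the statement already sketches it, I would write the verification out explicitly by checking the group axioms one at a time. Two ingredients are needed: $\mathcal{M}^{-1}_0$ is closed under $\circ$, and it contains inverses. Associativity is inherited from $\mathcal{M}$, since $(\mathcal{M},\circ)$ is a monoid. The identity $1_{\mathcal{M}}$ lies in $\mathcal{M}^{-1}_0$, because $1_{\A_1}$ commutes with $a$, $1_{\A_2}$ commutes with $b$, both diagonal entries are invertible, and the lower-left entry is $0_\K$.

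For closure, I would take $U=\mat{u_1}{u_2}{0}{u_4}$ and $V=\mat{v_1}{v_2}{0}{v_4}$ in $\mathcal{M}^{-1}_0$. By the multiplication rule,
$$U\circ V=\mat{u_1v_1}{u_1v_2+u_2v_4}{0}{u_4v_4},$$
so the lower-left entry $0\cdot v_1+u_4\cdot 0$ is the zero of $\K$. The commutant $(a)'$ is a subalgebra, so $u_1v_1a=u_1av_1=au_1v_1$, and in the same way $u_4v_4$ commutes with $b$. Products of invertible elements are invertible, which gives $U\circ V\in\mathcal{M}^{-1}_0$.

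For inverses, I would take the explicit candidate
$$U^{-1}=\mat{u_1^{-1}}{-u_1^{-1}u_2u_4^{-1}}{0}{u_4^{-1}}$$
and verify by direct multiplication that $U\circ U^{-1}=U^{-1}\circ U=1_{\mathcal{M}}$. For the off-diagonal entries this amounts to $u_1(-u_1^{-1}u_2u_4^{-1})+u_2u_4^{-1}=0$ and $u_1^{-1}u_2-u_1^{-1}u_2u_4^{-1}u_4=0$, which use module associativity and the fact that the units act trivially. The key remaining point is that $u_1^{-1}$ commutes with $a$: multiplying $u_1a=au_1$ by $u_1^{-1}$ on both sides gives $au_1^{-1}=u_1^{-1}a$, and likewise $u_4^{-1}$ commutes with $b$. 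Hence $U^{-1}\in\mathcal{M}^{-1}_0$.

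I expect no real obstacle here. The only subtle step is checking that inverses stay inside the commutant, and the one-line argument above handles it. One should also note that the identity $1_{\A_1}\cdot m=m$ is being used. The paper only assumes non-triviality, so I would read unital module actions as implicit, which the inverse formula stated just before the proposition already presupposes.
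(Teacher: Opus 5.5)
Your proof is correct and follows the paper's route: check that $1_{\mathcal{M}}$ lies in the set, write down the explicit inverse $U^{-1}$, and show it lies in $\mathcal{M}^{-1}_0$. You also make explicit two points the paper leaves tacit, namely closure of $\mathcal{M}^{-1}_0$ under $\circ$ and the commutation relations $au_1^{-1}=u_1^{-1}a$, $bu_4^{-1}=u_4^{-1}b$. Unitality of the actions on $\M$ need not be an extra implicit assumption: since $1_{\A_1}\cdot(1_{\A_1}\cdot m-m)=(1_{\A_1}1_{\A_1})\cdot m-1_{\A_1}\cdot m=0_\M$, the paper's non-triviality hypothesis already forces $1_{\A_1}\cdot m=m$, and symmetrically $m\cdot 1_{\A_2}=m$.
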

\noindent As our results below demonstrate, the algebra $\mathcal{M}_0^{-1}$ is more than enough to inspect the consistency of \eqref{sylm-}. Recall the sets from $\Omega$, $\Omega_0$, and $\Omega_c$ from \eqref{Omega}, \eqref{Omega0}, and \eqref{OmegaC}, respectively:
$$\Omega=\{(z,w): az=wb\},\ \Omega_0:=\{(z,z): az=zb\},\ \Omega_c=\{(z,w)\in\Omega: w-z=c\}.$$

\begin{lemma}\label{MN1N2C}  Let $a$ and $b$ be invertible in the respective algebras, such that $\sigma(a)\cap\sigma(b)\neq\emptyset$, and $\sigma(a)\cup\sigma(b)\subset\HR$. The following statements are true:
\begin{itemize}
\item[(a)] For every ordered pair $(z,w)\in\Omega$ there exist matrices $U$, $V\in\mathcal{M}^{-1}_0$ given as
$$U=\mat{u_1}{u_2}{0}{u_4},\quad V=\mat{v_1}{v_2}{0}{v_4},$$
which  satisfy
$$N_0^2=\left(U^{-1}\circ N^-_0\circ U\right)\circ\left(V\circ N^-_0\circ V^{-1}\right),$$
while the expressions $-\left(au_1v_2+u_1v_2b\right)$ and $\left(au_2v_4+u_2v_4b\right)$ are equal to $z$ and $w$ respectively, i.e., 
$$-(au_1v_2+u_1v_2b)=z,\  au_2v_4+u_2v_4b=w.$$

\item[(b)]Let $N_0^2=N_1\circ N_2$, where
$$N_1=U^{-1}\circ N^-_0\circ U,$$
$$N_2=V\circ N^-_0\circ V^{-1},$$ for some matrices $U=\mat{u_1}{u_2}{0}{u_4}\in \mathcal{M}^{-1}_0$ and $V=\mat{v_1}{v_2}{0}{v_4}\in \mathcal{M}^{-1}_0$. 
Then 
$$\left(\pm\left(au_1v_2+u_1v_2b\right), \mp\left( u_2v_4b+au_2v_4\right)\right)\in\Omega.$$
\end{itemize}
\end{lemma}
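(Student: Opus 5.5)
The plan is to compute the two conjugates $U^{-1}\circ N_0^-\circ U$ and $V\circ N_0^-\circ V^{-1}$ explicitly for arbitrary $U,V\in\mathcal{M}_0^{-1}$. The commutation relations built into $\mathcal{M}_0$ ($u_1a=au_1$, $u_4b=bu_4$, and likewise for $v_1,v_4$) should make both conjugates upper-triangular with diagonal $(a,-b)$. Then the single off-diagonal condition in $N_1\circ N_2=N_0^2$ will turn out to be precisely the intertwining relation $az=wb$ for the stated $z,w$. Both (a) and (b) follow from this one computation; for (a) we also need Lemma \ref{regeq} to produce the off-diagonal entries.

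\emph{Step 1 (conjugates).} Using $U^{-1}=\mat{u_1^{-1}}{-u_1^{-1}u_2u_4^{-1}}{0}{u_4^{-1}}$, compute
$$U^{-1}\circ N_0^-\circ U=\mat{u_1^{-1}au_1}{u_1^{-1}au_2+u_1^{-1}u_2u_4^{-1}bu_4}{0}{-u_4^{-1}bu_4}=\mat{a}{u_1^{-1}(au_2+u_2b)}{0}{-b}.$$
In the same way, with $V^{-1}=\mat{v_1^{-1}}{-v_1^{-1}v_2v_4^{-1}}{0}{v_4^{-1}}$ and $v_1a=av_1$, $v_4b=bv_4$,
$$V\circ N_0^-\circ V^{-1}=\mat{a}{-(av_2+v_2b)v_4^{-1}}{0}{-b}.$$
Write $p:=u_1^{-1}(au_2+u_2b)$ and $q:=-(av_2+v_2b)v_4^{-1}$.

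\emph{Step 2 (product).} Multiplying the two conjugates gives
$$N_1\circ N_2=\mat{a^2}{aq-pb}{0}{b^2}.$$
Hence $N_0^2=N_1\circ N_2$ holds if and only if $aq=pb$. Multiply this identity on the left by $u_1$ and on the right by $v_4$. On the left side, $u_1a=au_1$ turns $u_1aq v_4$ into $-a(au_1v_2+u_1v_2b)$. On the right side, $bv_4=v_4b$ turns $u_1pbv_4$ into $(au_2v_4+u_2v_4b)b$. Since $u_1$ and $v_4$ are invertible, the condition $aq=pb$ is therefore equivalent to
$$a\bigl(-(au_1v_2+u_1v_2b)\bigr)=\bigl(au_2v_4+u_2v_4b\bigr)b.$$
This proves (b) for the upper choice of signs. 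Multiplying the pair by $-1$ preserves membership in $\Omega$, since $\Omega$ is a linear subspace, which gives the lower choice of signs.

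\emph{Step 3 (part (a)).} Given $(z,w)\in\Omega$, take $u_1=1_{\A_1}$ and $v_4=1_{\A_2}$, with $u_4,v_1$ equal to the respective units. By Lemma \ref{regeq}, which applies because $\sigma(a)\cup\sigma(b)\subset\HR$, choose $u_2$ to be the unique solution of $au_2+u_2b=w$ and $v_2$ the unique solution of $av_2+v_2b=-z$. Both matrices lie in $\mathcal{M}_0^{-1}$ trivially. By Step 2 the identities $-(au_1v_2+u_1v_2b)=z$ and $au_2v_4+u_2v_4b=w$ hold, and $N_1\circ N_2=N_0^2$ is equivalent to $az=wb$, which is true by assumption.

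The only delicate point is the bookkeeping in Step 2. One must move $u_1$ and $v_4$ to the correct sides using only the commutation relations available in $\mathcal{M}_0$, since $u_1$ need not commute with $u_2$ or $v_2$. Once the order of the factors is kept correctly, the rest is routine.
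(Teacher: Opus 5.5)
Your proof is correct and essentially follows the paper's route. In (a) you choose the same unipotent $U,V$ with off-diagonal entries $p_1$ and $-p_2$ from Lemma \ref{regeq}, and in (b) your conjugate-then-multiply computation gives exactly the $(1,2)$-entry identity $a(au_1v_2+u_1v_2b)+(au_2v_4+u_2v_4b)b=0$ that the paper gets by expanding $UN_0^2V=N_0^-UVN_0^-$. One cosmetic slip: the pair you derive first, $\bigl(-(au_1v_2+u_1v_2b),\,au_2v_4+u_2v_4b\bigr)$, is the lower-sign choice rather than the upper one, but your linearity remark covers both signs anyway.
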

\begin{proof}
\begin{itemize}
\item[(a)] For easier orientation, let $r_1:=w$ and $r_2:=z$. For $i\in\{1,2\}$, let
$$N_{r_i}=\left[\begin{array}{cr}a & r_i\\ 0 & -b\end{array}\right].$$
Then 
$$N_{r_1}\circ N_{r_2}=\left[\begin{array}{cr}a & r_1\\ 0 & -b\end{array}\right]\circ \left[\begin{array}{cr}a & r_2\\ 0 & -b\end{array}\right]=\left[\begin{array}{cc}a^2 & ar_2-r_1b\\ 0 & b^2\end{array}\right]=N_0^2.$$
Since $\sigma(a)\cup\sigma(b)\subset\HR$, there exists a unique $p_i$ such that
$$ap_i+p_ib=r_i,$$
given by \eqref{sylsolm+} as
$$p_i=\int_0^\infty e^{-ta}r_ie^{-tb}dt.$$
Notice that if $ar_i\neq r_ib$, then $ap_i\neq p_ib$. However, by Roth's removal rule we have
$$N_{r_i}=\mat{1_{\A_1}}{-p_i}{0}{1_{\A_2}}\circ\mat{a}{0}{0}{-b}\circ\mat{1_{\A_1}}{p_i}{0}{1_{\A_2}},\quad i=1,2.$$
Since $N_0^2=N_{r_1}\circ N_{r_2}$, in that order, we choose the matrices $U$ and $V$ to be
$$U:=\mat{1_{\A_1}}{p_1}{0}{1_{\A_2}},\quad V:=\mat{1_{\A_1}}{-p_2}{0}{1_{\A_2}}\in\mathcal{M}^{-1}_0.$$
Then direct calculations give
$$-au_1v_2-u_1v_2b=ap_2+p_2b=r_2$$
and
$$au_2v_4+u_2v_4b=ap_1+p_1b=r_1.$$
\item[(b)] Conversely, let $N_1$ and $N_2$ be such that $N_0^2=N_1\circ N_2$, where $N_1=U^{-1}\circ N^-_0\circ U$ while $N_2=V\circ N^-_0\circ V^{-1}$, for 
$$U:=\mat{u_1}{u_2}{0}{u_4},\ V:=\mat{v_1}{v_2}{0}{v_4}\in\mathcal{M}_0^{-1}.$$ 
Then $N_0^2=N_1\circ N_2=U^{-1}\circ N^-_0\circ U\circ V\circ N^-_0\circ V^{-1}$ gives
$$\begin{aligned}
&\mat{u_1}{u_2}{0}{u_4}\circ \mat{a^2}{0}{0}{b^2}\circ \mat{v_1}{v_2}{0}{v_4}=\\
&\mat{a}{0}{0}{-b}\circ \mat{u_1v_1}{u_1v_2+u_2v_4}{0}{u_4v_4}\circ \mat{a}{0}{0}{-b}\Leftrightarrow\\
&\mat{u_1a^2v_1}{u_1a^2v_2+u_2b^2v_4}{0}{u_4b^2v_4}=\mat{au_1v_1a}{-au_1v_2b-au_2v_4b}{0}{bu_4v_4b}.
\end{aligned}$$
The latter implies that 
$$\begin{aligned}
&u_1a^2v_2+u_2b^2v_4=-au_1v_2b-au_2v_4b\Leftrightarrow\\
&a(au_1v_2+u_1v_2b)+(u_2v_4b+au_2v_4)b=0\end{aligned}$$
i.e., 
$$\left(\pm\left(au_1v_2+u_1v_2b\right), \mp\left( u_2v_4b+au_2v_4\right)\right)\in\Omega.$$
\end{itemize}
\end{proof}
\noindent Combining the previous results, we obtain the following theorem:

\begin{theorem}[Solvability and particular solution-matrix version]\label{part} Let $a$ and $b$ be invertible in $\A_1$ and $\A_2$, respectively, such that $\sigma(a)\cap\sigma(b)\neq\emptyset$, and with the property that $\sigma(a)\cup\sigma(b)\subset\HR$. For an arbitrary $c\in\M$, let $\bar{c}$ be the unique solution to the equation
\begin{equation}
\label{sylC}a\bar{c}+\bar{c}b=c.\end{equation} 
Then:
\begin{enumerate}
\item The following statements are equivalent:
\begin{itemize}
\item[(a)] The equation \eqref{sylm-} is solvable.
\item[(b)] There exist $U=\mat{u_1}{u_2}{0}{u_4}$ and $V=\mat{v_1}{v_2}{0}{v_4}$ in $\mathcal{M}^{-1}_0$, such that
\begin{equation}\label{MUN0V}N_0^2=\left(U^{-1}\circ N^-_0\circ U\right)\circ\left(V\circ N^-_0\circ V^{-1}\right),\end{equation}
and 
\begin{equation}
\label{uivjbarc}u_1v_2+u_2v_4=\bar{c}.
\end{equation}

\item[(c)] There exist $U'=\mat{a}{u}{0}{b'}$ and $V'=\mat{a'}{v}{0}{b}$ in $\mathcal{M}^{-1}_0$, with $a'\in (a)'$ and $b'\in (b)'$ arbitrary, such that
\begin{equation}\label{U'V'}
U'N_0^2V'=N^-_0 U'V'N^-_0,\end{equation}
and
\begin{equation}
\label{gensyl0}av+ub=\bar{c}.
\end{equation} 
\end{itemize}
\item If the equation \eqref{sylm-} is solvable, then with respect to the claim $(b)$, the expressions
\begin{equation}\label{solu12v12}
x_{u_1v_2}:=-(au_1v_2b^{-1}+u_1v_2);\quad x_{u_2v_4}:=a^{-1}u_2v_4b+u_2v_4
\end{equation}
define the same particular solution to \eqref{sylm-}. Alternatively, with respect to the claim $(c)$,  the expressions
\begin{equation}\label{soluvab}
x_v:=-(a^2vb^{-1}+av);\quad x_u:=a^{-1}ub^2+ub
\end{equation}
also define one and the same particular solution to \eqref{sylm-}.
\end{enumerate}
\end{theorem}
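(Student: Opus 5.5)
The plan is to reduce everything to the set $\Omega_c$ and its Corollary (solvability $\Leftrightarrow\Omega_c\neq\emptyset$), with Lemma \ref{MN1N2C} translating between pairs $(z,w)\in\Omega$ and the factorization \eqref{MUN0V}. Throughout, the link to \eqref{uivjbarc} and \eqref{gensyl0} is uniqueness for the regular equation \eqref{sylC} (Lemma \ref{regeq}). For $m\in\M$, write $\overline{m}$ for the unique solution of $a\overline{m}+\overline{m}b=m$; the map $m\mapsto\overline{m}$ is linear.

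\emph{(a)$\Rightarrow$(b).} Let $x$ solve \eqref{sylm-}. As in the proof of Lemma \ref{C1C2Y}, put $z:=xb$ and $w:=ax$. Then $az=axb=wb$ and $w-z=c$, so $(z,w)\in\Omega_c$. Lemma \ref{MN1N2C}(a) gives $U,V\in\mathcal{M}_0^{-1}$ satisfying \eqref{MUN0V}, with $-(au_1v_2+u_1v_2b)=z$ and $au_2v_4+u_2v_4b=w$. Subtracting these gives $a(u_1v_2+u_2v_4)+(u_1v_2+u_2v_4)b=w-z=c$, and uniqueness in \eqref{sylC} yields \eqref{uivjbarc}.

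\emph{(b)$\Rightarrow$(a) and formulas \eqref{solu12v12}.} Given $U,V$, Lemma \ref{MN1N2C}(b) (lower signs) gives $(z,w)\in\Omega$ with
\[
z:=-(au_1v_2+u_1v_2b),\qquad w:=au_2v_4+u_2v_4b .
\]
Then $w-z=a\bar c+\bar c b=c$ by \eqref{uivjbarc}, so $(z,w)\in\Omega_c$. Lemma \ref{C1C2Y} (with $c_1=w$, $c_2=z$, $y=0$) shows that $x=a^{-1}w=zb^{-1}$ solves \eqref{sylm-}. Expanding, $a^{-1}w=x_{u_2v_4}$ and $zb^{-1}=x_{u_1v_2}$, and these coincide because $az=wb$.

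\emph{Part (c).} Multiplying \eqref{MUN0V} on the left by $U$ and on the right by $V$ shows it is equivalent to $UN_0^2V=N_0^-UVN_0^-$. So \eqref{U'V'} is just \eqref{MUN0V} for $U=U'$, $V=V'$. The $(1,2)$ entry of $U'V'$ is $av+ub$, so \eqref{gensyl0} is \eqref{uivjbarc}, and (c)$\Rightarrow$(b) is immediate. For the $(1,2)$ entry of \eqref{U'V'}, the computation in Lemma \ref{MN1N2C}(b) with $u_1=a$, $v_4=b$ reads $a(a^2v+avb)+(aub+ub^2)b=0$. Thus $z=-(a^2v+avb)$, $w=aub+ub^2$ lies in $\Omega_c$, and $a^{-1}w=x_u$, $zb^{-1}=x_v$, which gives \eqref{soluvab}.

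\emph{(a)$\Rightarrow$(c): the main obstacle.} The diagonal entries must be exactly $a$ and $b$, and naive diagonal rescaling of the $U,V$ from (b) spoils \eqref{gensyl0} (it produces $a\bar c b$ instead of $\bar c$). So I would construct $u,v$ directly from a solution $x$ by inverting the formulas in \eqref{soluvab}:
\[
v:=-a^{-1}\,\overline{xb},\qquad u:=\overline{ax}\,b^{-1}.
\]
Then $av+ub=\overline{ax}-\overline{xb}=\overline{c}$ by linearity of $m\mapsto\overline{m}$, which is \eqref{gensyl0}. It remains to check \eqref{U'V'}:
\begin{itemize}
\item The diagonal entries reduce to $a\,a^2a'=a\,a\,a'\,a$ and $b'b^2b=b\,b'b\,b$, which hold since $a'\in(a)'$ and $b'\in(b)'$.
\item The $(1,2)$ entry is the identity $a(a^2v+avb)+(aub+ub^2)b=0$. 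With $a^2v+avb=-\overline{axb}$ and $aub+ub^2=\overline{axb}$ (using that $m\mapsto\overline m$ commutes with left multiplication by $a$ and right multiplication by $b$, as in the proof of Lemma \ref{homsylsqrtM}), it becomes $-a\,\overline{xb}\,b+a\,\overline{ax}\,b=0$.
\end{itemize}
The last equality is where I expect to spend the effort. I would first try to reduce it, via $\overline{ax}-\overline{xb}=\bar c$ and the intertwining of the bar map, to the identity $az=wb$ for $z=xb$, $w=ax$. If that reduction fails, I would instead adjust $v$ by a solution of \eqref{sylhom}, which does not affect solvability.
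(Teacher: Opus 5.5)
Your treatment of (a)$\Leftrightarrow$(b), (c)$\Rightarrow$(b), and both pairs of solution formulas is correct and matches the paper: Lemma \ref{C1C2Y} together with Lemma \ref{MN1N2C}, plus uniqueness for \eqref{sylC}.

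\textbf{The gap: (a)$\Rightarrow$(c).} It sits exactly at the step you flag, and your planned reduction cannot close it.
\begin{itemize}
\item The identities $a^2v+avb=-\overline{axb}$ and $aub+ub^2=\overline{axb}$ are incorrect. They drop the terms $-\overline{xb^2}$ and $\overline{a^2x}$, respectively.
\item The identity you reduce to, $-a\,\overline{xb}\,b+a\,\overline{ax}\,b=0$, is just $a\bar c b=0$. This fails whenever $c\neq0$. It consists only of the part $a^2vb+aub^2$ of the $(1,2)$ entry; the terms $a^3v+ub^3$ have been lost.
\item Your fallback does not help. Replacing $v$ by $v+h$ with $ah=hb$ changes $av+ub$ by $ah$, which is nonzero for $h\neq0$. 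This destroys \eqref{gensyl0}.
\end{itemize}

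\textbf{The fix.} Use the defining equation of the bar map rather than its commutation with $a$ and $b$. With your choice $v=-a^{-1}\overline{xb}$, $u=\overline{ax}\,b^{-1}$,
\[
a^2v+avb=a(av)+(av)b=-\bigl(a\,\overline{xb}+\overline{xb}\,b\bigr)=-xb,\qquad aub+ub^2=a(ub)+(ub)b=a\,\overline{ax}+\overline{ax}\,b=ax .
\]
So the $(1,2)$ entry of \eqref{U'V'} is $a(-xb)+(ax)b=0$, and your construction works.

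\textbf{The paper's route.} The paper avoids any computation by proving (b)$\Rightarrow$(c) instead. Given $U,V$ from (b), it sets $v:=a^{-1}u_1v_2$ and $u:=u_2v_4b^{-1}$, so that $av+ub=u_1v_2+u_2v_4=\bar c$. The already established $(1,2)$ identity of \eqref{MUN0V},
\[
a^2u_1v_2+u_2v_4b^2+au_1v_2b+au_2v_4b=0,
\]
then reads literally $a^3v+ub^3+a^2vb+aub^2=0$, which is the $(1,2)$ entry of \eqref{U'V'}. For the Roth-type matrices of Lemma \ref{MN1N2C}(a) one has $u_1=v_4=1$, $u_2=\overline{ax}$ and $v_2=-\overline{xb}$. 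This gives exactly your $u,v$, so the diagonal rescaling you worried about was never needed.

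Finally, choose $a'$ and $b'$ invertible (e.g.\ the units) so that $U',V'\in\mathcal{M}_0^{-1}$.
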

\begin{proof}\begin{enumerate}
\item $(a)\Leftrightarrow(b):$ By Lemma \ref{C1C2Y}, the equation \eqref{sylm-} is solvable if and only if there exist elements $c_1$ and $c_2$ in $\M$, such that $c=c_1-c_2$ while $(c_2,c_1)\in\Omega$. On the other hand, by Lemma \ref{MN1N2C}, $(c_2,c_1)\in\Omega$ if and only if there exist matrices $U$ and $V$ in $\mathcal{M}^{-1}_0$, such that \eqref{MUN0V} holds, in the manner that  $c_1=u_2v_4b+au_2v_4$, while $c_2=-au_1v_2-u_1v_2b$. Finally, the condition $c_1-c_2=c$ holds if and only if
$$\begin{aligned}
&u_2v_4b+au_2v_4+au_1v_2+u_1v_2b=c\Leftrightarrow\\
&a(u_1v_2+u_2v_4)+(u_1v_2+u_2v_4)b=c.\end{aligned}$$

\noindent$(b)\Rightarrow (c):$ Assume that $U$ and $V$ are provided such that \eqref{MUN0V} holds and that $u_1v_2+u_2v_4=\bar{c}$. Then by choosing $v:=a^{-1}u_1v_2$ and $u:=u_2v_4b^{-1}$, it follows that
$$av+ub=u_1v_2+u_2v_4=\bar{c}.$$
Similarly, from \eqref{MUN0V} we have
$$\mat{u_1v_1a^2}{a^2u_1v_2+u_2v_4b^2}{0}{u_4v_4b^2}=\mat{u_1v_1a^2}{-au_1v_2b-au_2v_4b}{0}{u_4v_4b^2},$$
therefore
\begin{eqnarray}\label{consistency}\begin{aligned}
&a^2u_1v_2+u_2v_4b^2+au_1v_2b+au_2v_4b=0\Leftrightarrow\\
&a^3v+ub^3+a^2vb+aub^2=0\Leftrightarrow\\
&a^3v+ub^3=-a^2vb-aub^2.\end{aligned}
\end{eqnarray}
The latter shows that the equality
\begin{equation}\label{matmain}\mat{a^3a'}{a^3v+ub^3}{0}{b'b^3}=\mat{a^3a'}{-a^2vb-aub^2}{0}{b'b^3}\end{equation}
is true, for any $a'\in(a)'$, and any $b'\in(b)'$. Therefore, the equality \eqref{U'V'} is true as well.\\

$(c)\Rightarrow(b)$: obvious.
\item Finally, if the equation \eqref{sylm-} is solvable then both $(b)$ and $(c)$ are true. With respect to the claim $(b)$, Lemma \ref{C1C2Y} states that a particular solution can be chosen to be given as \eqref{solu12v12}, due to \eqref{solparinv}. On the other hand, with respect to the claim $(c)$, by adjusting the formulas \eqref{solu12v12} and \eqref{solparinv} to the setting where $u_1=a$, $v_4=b$, $u_2=u$, and $v_2=v$, we obtain the formula \eqref{soluvab} for a particular solution. 
\end{enumerate}
\end{proof}

\noindent Even though Theorem \ref{part} is in practice inadequate, it still provides enough information about a very important triplet for which the Sylvester equation is not solvable. While the following result is a well-known claim, we proceed to prove it via Theorem \ref{part} as well.

\begin{corollary} Let $\A$ be a complex unital Banach algebra. Then the equation
\begin{equation}
\label{axxa1}
ax-xa=1_\A
\end{equation}
is not solvable in $\A$. 
\end{corollary}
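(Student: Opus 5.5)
The plan is to argue by contradiction, passing through Theorem \ref{part} with $\A_1=\A_2=\M=\A$ and $b=a$, and then reducing to the familiar trace-type obstruction. Suppose $ax-xa=1_\A$ has a solution $x$. The equation is invariant under $a\mapsto a+\lambda 1_\A$, so we may assume $\sigma(a)\subset\HR$. Then $a$ is invertible, $\sigma(a)\cap\sigma(b)=\sigma(a)\neq\emptyset$, and Theorem \ref{part} applies. By Lemma \ref{regeq}, the element $\bar c$ solving $a\bar c+\bar c a=1_\A$ is
$$\bar c=\int_0^\infty e^{-2ta}\,dt=\tfrac12 a^{-1}.$$
Claim (c) of Theorem \ref{part} then gives $u,v\in\A$ with $av+ua=\tfrac12 a^{-1}$, satisfying the consistency relation \eqref{consistency}:
$$a^3v+ua^3=-a^2va-aua^2.$$

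The next step is to rewrite both relations in terms of $x$ itself. From the proof of Theorem \ref{part} and Lemma \ref{C1C2Y}, we may take $c_1=ax$ and $c_2=xa$, so that $u_1v_2=xa\cdot(\ldots)$. Rather than tracking these products, I will use the solution formula \eqref{soluvab} directly: $x=a^{-1}ua^2+ua$. Substituting this into $ax-xa=1$ and using $av+ua=\tfrac12a^{-1}$ should reduce the problem to an identity of the form $a\,y-y\,a=1_\A$ for some $y$ in terms of $u$. That is, one recovers the same type of equation, so the matrix reformulation by itself produces no contradiction.

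For this reason the main obstacle is that Theorem \ref{part} only reformulates solvability and does not by itself rule it out. The contradiction must come from a spectral argument, and I would conclude with the classical Wielandt trick, phrased so that it fits the paper's operator viewpoint. Let $D:\A\to\A$ be the derivation $D(y)=xy-yx$. From $ax-xa=1$, induction gives
$$a^nx-xa^n=na^{n-1}.$$
Taking norms yields
$$n\|a^{n-1}\|\le 2\|a\|\,\|x\|\,\|a^{n-1}\|.$$
Since $a$ is invertible after the shift, $a^{n-1}\neq 0$ for all $n$. Hence $n\le 2\|a\|\|x\|$ for every $n$, which is impossible.

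Alternatively, staying closer to the spirit of the paper, one could use the multiplication operators $A$ and $B$ from the introduction. With $b=a$ they commute, so $\sigma(A-B)\subset\sigma(A)-\sigma(B)$. One would then need to show that $1_\A\notin\R(A-B)$, and the Wielandt estimate above is the cleanest way to establish this. I therefore expect to present the matrix reduction through Theorem \ref{part} as the framing, with the norm estimate on $a^n x-xa^n$ as the decisive step.
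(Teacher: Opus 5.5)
Your argument is correct, but the step that actually proves the statement differs from the paper's. The paper tries to extract the contradiction from the matrix machinery itself. It takes $U,V$ from Theorem~\ref{part}(b), equates the $(1,2)$ entries to obtain \eqref{char}, and substitutes $u_1v_2+u_2v_4=\tfrac12a^{-1}$. It then inserts the particular-solution formula \eqref{charc} into $a^2x-xa^2=2a$ to arrive at $-2a=0$.

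You use Theorem~\ref{part} only as framing and close with Wielandt's estimate $n\|a^{n-1}\|=\|a^nx-xa^n\|\le 2\|a\|\,\|x\|\,\|a^{n-1}\|$. The preliminary shift making $a$ invertible guarantees $a^{n-1}\neq 0$ for all $n$, which spares you the usual descending induction in the nilpotent case.

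Your suspicion that the matrix reformulation gives no contradiction on its own is well founded, and it points to a slip in the paper's route. With $b=a$, formula \eqref{solu12v12} gives $x=a^{-1}u_2v_4a+u_2v_4=-au_1v_2a^{-1}-u_1v_2$, whereas \eqref{charc} has $a$ and $a^{-1}$ interchanged. With the correct formula, put $p=u_1v_2$ and $q=u_2v_4$. Then $a^2x=a(aq+qa)$ and $xa^2=-(ap+pa)a$, and \eqref{char} turns \eqref{chara} into $a^2(q-p)-(q-p)a^2=2a$. That is just another equation of the same kind, not a contradiction.

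So the paper's approach, which would showcase Theorem~\ref{part} as a non-solvability test, does not go through as written. Yours is elementary, uses only submultiplicativity of the norm, and is complete. Everything involving Theorem~\ref{part} can be dropped from your write-up; keep only the spectral shift. Also, the reduction you sketch via $x=a^{-1}ua^2+ua$ actually yields $a^2u-ua^2=1_\A$, with $a^2$ in place of $a$, not an equation of the form $ay-ya=1_\A$.
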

\begin{proof}
Assume conversely: let $x\in\A$ be a solution to \eqref{axxa1}. Then, for any $\lambda \in\CC$, the solution $x$ also solves the equation $(a-\lambda 1_\A)x-x(a-\lambda 1_{\A})=1_\A$, thus, without loss of generality, we assume that $\sigma(a)\subset\HR$ as before. For any solution $x$ to $ax-xa=1_\A$, it follows that (see also \cite{HC}) $a^2x=a+axa$, while $-xa^2=a-axa$, therefore
\begin{equation}
\label{chara}
a^2x-xa^2=a+a=2a.
\end{equation}
On the other hand, due to the claim $(b)$ of Theorem \ref{part}, there exist matrices 
$$U=\mat{u_1}{u_2}{0}{u_4}, \quad V=\mat{v_1}{v_2}{0}{v_4}\in\mathcal{M}_0^{-1}$$
such that
$$U\mat{a^2}{0}{0}{a^2}V=\mat{a}{0}{0}{-a}UV\mat{a}{0}{0}{-a}.$$
Computing the above entries gives
$$U\mat{a^2}{0}{0}{a^2}V=\mat{u_1 a^2v_1}{u_1a^2v_2+u_2a^2v_4}{0}{u_4a^2v_4}$$
and
$$\mat{a}{0}{0}{-a}UV\mat{a}{0}{0}{-a}=\mat{au_1v_1a}{-a(u_1v_2+u_2v_4)a}{0}{au_4v_4a}.$$
The two are equal if and only if
\begin{equation}\label{char}a^2u_1v_2+u_2v_4a^2+a(u_1v_2+u_2v_4)a=0.\end{equation}
Since the expression $u_1v_2+u_2v_4$ solves the regular equation $a\bar{c}+\bar{c}a=1_\A$, it is calculated via \eqref{sylsolm+} as
$$u_1v_2+u_2v_4=\int_0^\infty e^{-ta}1_\A e^{-ta}dt=\int_0^\infty e^{-2ta}dt=\frac{1}{2}a^{-1}.$$
Substituting this into \eqref{char} gives
\begin{equation}
\label{charb}
a^2u_1v_2+u_2v_4a^2=-\frac{1}{2}a.\end{equation}
Finally, recall that the solution $x$ can be chosen to be given as
\begin{equation}\label{charc}x=u_2v_4+au_2v_4a^{-1}=-u_1v_2-a^{-1}u_1v_2a.\end{equation}
Therefore, 
\begin{equation}
\label{chard}
a^2x=-a^2u_1v_2-au_1v_2a,\quad xa^2=u_2v_4a^2+au_2v_4a.\end{equation}
Combining \eqref{chara}, \eqref{char}, and \eqref{chard} gives
$$-2a=a^2u_1v_2+au_1v_2a+u_2v_4a^2+au_2v_4a=0$$
which is impossible.
\end{proof}

Given that  in Theorem \ref{part} the entries $a'\in(a)'$ and $b'\in(b)'$ can be chosen arbitrarily, the following corollary immediately follows from \eqref{consistency}:

\begin{corollary}\label{gensystemsol} With respect to the notation and assumptions from Theorem \ref{part}, the equation \eqref{sylm-} is solvable if and only if there exist $u$ and $v\in\M$ such that the system is consistent:
\begin{equation}\label{systemforuandv}
\begin{cases}av+ub=\overline{c}\\
a^3v+a^2vb+ub^3+aub^2=0.
\end{cases}
\end{equation}
In that case,   the expressions
\eqref{soluvab} define a (one and the same) particular solution to \eqref{sylm-}.
\end{corollary}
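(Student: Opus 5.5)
Since Theorem \ref{part} already identifies solvability of \eqref{sylm-} with statement $(c)$, I would prove the corollary by unpacking $(c)$ into the two displayed equations. I would also give a direct check that bypasses the matrices, both as a sanity test and to make the particular solution transparent.

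For the forward direction, assume \eqref{sylm-} is solvable with solution $x$. Theorem \ref{part} gives $U'=\mat{a}{u}{0}{b'}$ and $V'=\mat{a'}{v}{0}{b}$ satisfying \eqref{U'V'} and \eqref{gensyl0}. The first line of \eqref{systemforuandv} is exactly \eqref{gensyl0}. For the second line, I would compare the $(1,2)$ entries of the two sides of \eqref{U'V'}, as in the computation \eqref{consistency}. The left side gives $a\cdot a^2 v+u b^2 b=a^3v+ub^3$. The right side is $N_0^-U'V'N_0^-$, whose $(1,2)$ entry is $-a(av+ub)b=-a^2vb-aub^2$. Equating them yields the second equation, and the diagonal entries impose nothing because $a'\in(a)'$ and $b'\in(b)'$ are arbitrary commuting elements. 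Alternatively, one can take explicit witnesses $v:=a^{-1}u_1v_2=a^{-1}xb^{-1}\cdot(\ldots)$ as in the proof of $(b)\Rightarrow(c)$. More simply, set $v:=-a^{-1}\bar x$ and $u:=\bar x' b^{-1}$, where $\bar x$ and $\bar x'$ are suitable regular-equation solutions, and then verify directly.

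For the converse, assume $u,v$ satisfy \eqref{systemforuandv}. Set $U':=\mat{a}{u}{0}{1}$ and $V':=\mat{1}{v}{0}{b}$. These lie in $\mathcal{M}_0^{-1}$ by invertibility of $a$ and $b$, and $1\in(a)'\cap(b)'$. The $(1,2)$ computation above, run backwards, shows that \eqref{U'V'} holds, and \eqref{gensyl0} is the first equation. Hence $(c)$ holds, and Theorem \ref{part} gives solvability.

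As an independent verification, and to justify the claim about \eqref{soluvab}, I would check directly that $x_u=x_v$ and that this common element solves \eqref{sylm-}. Rewrite the second equation as $a^2(av+vb)=-(au+ub)b^2$. Multiplying on the left by $a^{-1}$ and on the right by $b^{-1}$ gives $a(av+vb)b^{-1}=-a^{-1}(au+ub)b$, that is, $x_v=-(a^2vb^{-1}+av)=a^{-1}ub^2+ub=x_u$. Call this common value $x$. Then
$$ax-xb=a(a^{-1}ub^2+ub)-(-(a^2vb^{-1}+av))b=ub^2+aub+a^2v+avb.$$
This equals $a(av+ub)+(av+ub)b=a\bar c+\bar c b=c$, by the first equation and \eqref{sylC}.

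The main obstacle is getting the entry computations of \eqref{U'V'} right. In particular one must confirm that the arbitrary $a'$ and $b'$ truly drop out, which they do because they appear only in the diagonal entries, where commutation makes both sides equal.
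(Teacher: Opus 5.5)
Your proposal is correct, and its core is the paper's own proof. Both directions pass through Theorem~\ref{part}(c). The second equation is the $(1,2)$ entry of \eqref{U'V'}, and the diagonal entries agree automatically because $a'\in(a)'$ and $b'\in(b)'$. Your choice $a'=1_{\A_1}$, $b'=1_{\A_2}$ in the converse is legitimate.

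Where you genuinely differ is in the converse and the final assertion. The paper gets the formulas \eqref{soluvab} only by going back through item~2 of Theorem~\ref{part}, hence through Lemma~\ref{MN1N2C}(b) and Lemma~\ref{C1C2Y}. Your rewriting of the second equation as $a^2(av+vb)+(au+ub)b^2=0$ gives $x_v=x_u$ at once, and then $ax-xb=a(av+ub)+(av+ub)b=c$. That check is self-contained: it uses only the invertibility of $a$ and $b$ and the defining identity of $\bar c$. It also shows why the two expressions coincide. The paper's route, in exchange, keeps the corollary tied to the matrix characterization that is the point of the paper.

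The explicit witnesses you sketch for the forward direction are not usable as written. The expression $a^{-1}xb^{-1}\cdot(\ldots)$ is not a formula, and if $\bar x$ is the solution of $a\bar x+\bar xb=x$, then $v=-a^{-1}\bar x$ is missing a right factor $b$. This is harmless, since Theorem~\ref{part} already covers that direction.

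If you want a matrix-free proof throughout, take $\bar x$ from Lemma~\ref{regeq} and set $u:=a\bar xb^{-1}$, $v:=-a^{-1}\bar xb$. Then $au+ub=axb^{-1}$ and $av+vb=-a^{-1}xb$, which gives the second equation. Next, $av+ub=a\bar x-\bar xb$ satisfies $a(a\bar x-\bar xb)+(a\bar x-\bar xb)b=a^2\bar x-\bar xb^2=ax-xb=c$, so it equals $\bar c$ by uniqueness. Finally $x_u=x_v=x$, so the formulas return the given solution.
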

\begin{proof} By Theorem \ref{part}, the equation \eqref{sylm-} is solvable if and only if there exist $u$ and $v\in\M$ such that \eqref{gensyl0} holds, and simultaneously the matrix equality \eqref{matmain} holds. Comparing the positions $(1,2)$ of the matrices in \eqref{matmain}, we see conclude that \eqref{sylm-} is solvable if and only if there exist $u$ and $v\in\M$ such that \eqref{systemforuandv} is consistent. \end{proof}
Notice that when $u=v$, the second equation from the system \eqref{systemforuandv} reduces to the regular generalized Sylvester equation
\begin{equation}
\label{RajUch} a^3u+a^2ub+aub^2+ub^3=0, 
\
\end{equation}
which has the unique solution $u=0$ when $\sigma(a)\cup\sigma(b)\in\Lambda_{\pi/3} $ (see \cite{RBMU} for details).
However, the expression \eqref{soluvab} prevents the possibility that $u=v$, whenever $c\neq0$.

\section{Finding the parameters $u$ and $v$}
By Corollary \ref{gensystemsol}, solvability of \eqref{sylm-} is equivalent to the consistency of \eqref{systemforuandv}, therefore in this section we focus on finding convenient forms for the missing parameters $u$ and $v$ which solve the latter. In order to reduce repetition, throughout this subsection it is assumes that $a$ and $b$ are invertible in $\A_1$ and $\A_2$, respectively, such that $\sigma(a)\cap\sigma(b)\neq\emptyset$ (though this is not a necessary condtion, but merely an emphasis on the singularity of the problem), and with the property that $\sigma(a)\cup\sigma(b)\subset\HR$. Moreover, we assume that $c\in\M$ is provided (and fixed), and by $\bar{c}$ we denote the unique solution to the equation 
\begin{equation}\label{barc}a\bar{c}+\bar{c}b=c.\end{equation}
For short, we denote by
\begin{equation}\label{r}
r:=a^{-1}\bar{c}b+a\bar{c}b^{-1}.
\end{equation}
Respectively, for arbitrary $u$ and $v$ in $\M$, we denote the following relations:
\begin{equation}\label{gensyl}
av+ub=\bar{c}.
\end{equation}
\begin{equation}\label{gensylvu}
au+vb=\bar{c}+a^{-1}\bar{c}b+a\bar{c}b^{-1}.
\end{equation}
 \begin{equation}\label{u+v}
u+v=a^{-1}cb^{-1}.
\end{equation}
 \begin{equation}\label{RajUchiuv}
a^3v+a^2vb+ub^3+aub^2=0.
\end{equation}

\begin{lemma}\label{2of4} Let $a$, $b$, $c$, and $\bar{c}$ be provided as above. Let $u$ and $v\in\M$ be arbitrary. Then the chain of equivalences is true for $u$ and $v$:
\begin{eqnarray}
\label{equivalence}
\begin{aligned}
&\left(\eqref{gensyl}\wedge\eqref{gensylvu}\right)\Leftrightarrow\left(\eqref{u+v}\wedge\eqref{gensylvu}\right)\Leftrightarrow\left(\eqref{gensyl}\wedge\eqref{u+v}\right)\\
&\Leftrightarrow\left(\eqref{RajUchiuv}\wedge\eqref{u+v}\right)\Leftrightarrow\left(\eqref{gensyl}\wedge\eqref{RajUchiuv}\right).
\end{aligned}
\end{eqnarray} 
In other words, if $u$ and $v$ solve any two equations enlisted in \eqref{equivalence}, then they solve all of them.
\end{lemma}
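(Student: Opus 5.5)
The plan is to express everything through the multiplication operator $L\in\B(\M)$, $L(y):=ay+yb$. By Lemma \ref{regeq}, $L$ is injective (indeed invertible), and it commutes with left multiplication by powers of $a$ and right multiplication by powers of $b$. Throughout, I use the two auxiliary quantities
$$s:=u+v,\qquad z:=av-vb.$$
Since $c=L(\bar c)$, I first rewrite the right-hand side of \eqref{u+v} as $a^{-1}cb^{-1}=\bar c b^{-1}+a^{-1}\bar c$. Then I reduce each of the four relations to a statement about $s$ and $z$, and check that any two of them force the others.

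\emph{Step 1 (the three linear relations).} Adding \eqref{gensyl} and \eqref{gensylvu} gives
$$L(s)=2\bar c+a^{-1}\bar c b+a\bar c b^{-1}.$$
A direct computation shows that $L(\bar c b^{-1}+a^{-1}\bar c)$ equals this same right-hand side. By injectivity of $L$, the sum relation is therefore equivalent to \eqref{u+v}. Consequently:
\begin{itemize}
\item \eqref{gensyl} together with \eqref{gensylvu} yields \eqref{u+v};
\item if \eqref{u+v} holds, the sum relation holds, so \eqref{gensyl} and \eqref{gensylvu} imply each other, since each equals the sum relation minus the other.
\end{itemize}
This settles the first two equivalences in \eqref{equivalence}.

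\emph{Step 2 (rewriting \eqref{RajUchiuv}).} Substitute $u=s-v$ into the left-hand side of \eqref{RajUchiuv} and group terms:
$$a^3v+a^2vb+aub^2+ub^3=\bigl(a^2w-wb^2\bigr)+L(s)b^2,\qquad w:=av+vb.$$
Next, observe the identities
$$aw-wb=a^2v-vb^2=L(z),\qquad a^2w-wb^2=L(aw-wb)=L^2(z).$$
Since $L(s)b^2=L(sb^2)$, injectivity of $L$ shows that \eqref{RajUchiuv} is equivalent to
$$L(z)=-sb^2. \tag{$\ast$}$$
This step holds for arbitrary $u,v$, with no other relation assumed. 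Similarly, using $u=s-v$, \eqref{gensyl} reads $z=\bar c-sb$, which is equivalent (apply $L$, and use injectivity for the converse) to
$$L(z)=c-asb-sb^2. \tag{$\ast\ast$}$$

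\emph{Step 3 (closing the chain).} Given \eqref{gensyl}$\wedge$\eqref{RajUchiuv}, comparing $(\ast)$ and $(\ast\ast)$ gives $asb=c$, i.e.\ $s=a^{-1}cb^{-1}$, which is \eqref{u+v}. Conversely, under \eqref{u+v} we have $asb=c$. Then $(\ast)$ and $(\ast\ast)$ coincide, so \eqref{gensyl}$\Leftrightarrow$\eqref{RajUchiuv}. Combined with Step 1, this yields every equivalence in \eqref{equivalence}.

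The main obstacle I expect is finding the right grouping in Step 2. The quartic expression must be factored as $L^2(z)$ plus a term depending only on $s$. I would verify the identity $a^2w-wb^2=L(aw-wb)$ and the substitution carefully, since everything else is bookkeeping with the injective operator $L$.
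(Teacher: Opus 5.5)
Your proof is correct and follows essentially the same route as the paper. Step 1 is the paper's argument: add the two linear relations and use regularity of $y\mapsto ay+yb$. Your $(\ast)$ and $(\ast\ast)$ are exactly the paper's reductions $a^2v+ub^2=0$ and $a^2v+ub^2+a(u+v)b=c$, rewritten in the variables $s,z$; the detour through $L^2(z)$ can be skipped, since the quartic factors directly as $L(a^2v+ub^2)$.
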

\begin{remark} At this point, the only implication missing is that if some $u$ and $v$ solve the equations \eqref{gensylvu} and \eqref{RajUchiuv}, then the pair $(u,v)$ solves all of them.This will be proven to be true shortly, though under a slightly stronger condition, which is always feasible.
\end{remark}
\begin{proof}
$$\left(\eqref{gensyl}\wedge\eqref{gensylvu}\right) \Longleftrightarrow  \left(\eqref{u+v}\wedge\eqref{gensylvu}\right)
\Longleftrightarrow\left(\eqref{u+v}\wedge\eqref{gensyl}\right):$$
From \eqref{gensyl} and \eqref{gensylvu} it follows that
$$\begin{aligned}
&a(u+v) + (u+v)b
= 2\bar{c}
+ \int_0^\infty e^{-at}(a^{-1} cb + acb^{-1})e^{-bt}dt=\\
=& \int_0^\infty e^{-at}(c + a^{-1}cb + c + acb^{-1})e^{-bt}dt=\\
=&\int_0^\infty e^{-at}\left(a(a^{-1}c) + a^{-1}cb\right)e^{-bt}dt+\int_0^\infty e^{-at}\left(a(cb^{-1}) + (cb^{-1})b\right)e^{-bt}dt=\\
=& a^{-1}c + cb^{-1}.
\end{aligned}$$
Since the transform $(u+v)\mapsto a(u+v)+(u+v)b$ is regular, it follows that $u+v=a^{-1}cb^{-1}$ is the only solution to the above equation, that is, \eqref{u+v} is true. 

Similarly, if \eqref{u+v} holds, then by the calculations above we have:
$$a(u+v)+(u+v)b=a^{-1}c+cb^{-1}=2\bar{c}
+ \int_0^\infty e^{-at}(a^{-1} cb + acb^{-1})e^{-bt}dt.$$
Ergo \eqref{gensyl} is true if and only if \eqref{gensylvu} is also true. 
$$\left(\eqref{gensyl}\wedge\eqref{RajUchiuv}\right)\Longleftrightarrow\left(\eqref{RajUchiuv}\wedge\eqref{u+v}\right)\Longleftrightarrow\left(\eqref{gensyl}\wedge\eqref{u+v}\right):$$
Assume that \eqref{gensyl} holds. Then  $$\begin{aligned}
&a^3v+ub^3+a^2vb+aub^2=\\
&a(a^2v+ub^2)+(a^2v+ub^2)b=\\
&a\left(a\bar{c}+\bar{c}b-a(u+v)b\right)+\left(a\bar{c}+\bar{c}b-a(u+v)b\right)b=\\
&a\left(c-a(u+v)b\right)+\left(c-a(u+v)b\right)b.\end{aligned}$$ 
Therefore, \eqref{RajUchiuv} holds if and only if \eqref{u+v} holds.

Conversely, assume that \eqref{RajUchiuv} and \eqref{u+v} hold. Then 
$$\begin{aligned}
&0=a^3v+a^2vb+ub^3+aub^2=\\
&a(a^2v+ub^2)+(a^2v+ub^2)b\Leftrightarrow\\
&a^2v+ub^2=0\Leftrightarrow\\
&a^2v+ub^2=c-a(u+v)b\Leftrightarrow\\
&a^2v+ub^2+aub+avb=c\Leftrightarrow\\
&a(av+ub)+(av+ub)b=c\Leftrightarrow\\
&av+ub=\overline{c}.
\end{aligned}$$ 

\end{proof}
Since $a$ and $b$ are invertible, note that each equation from \eqref{gensyl}--\eqref{RajUchiuv} is solvable, each with infinitely many pairwise unique solutions: if one of the entries $u,v$ is chosen arbitrarily, then it uniquely determines the other one within that equation. Thus, the goal is to find the pairs $(u,v)$ which satisfy any two conditions enlisted in \eqref{equivalence}. 
 \begin{lemma}\label{uvsyl}
Let $a$, $b$, $c$, and $\bar{c}$ be appropriately given elements, and let $r$ be provided as in \eqref{r}. For arbtirary two elements $u,v\in\M$ the following statements are equivalent: 
\begin{itemize}
\item[(a)] The ordered pair $(u,v)$ satisfies any two conditions enlisted in \eqref{equivalence}.
\item[(b)] The element $u$ satisfies
\begin{equation}
\label{auub}
au-ub=a\bar{c}b^{-1}\end{equation}
\item[(c)] The element $v$ satisfies:
\begin{equation}\label{avvb}
av-vb=-a^{-1}\bar{c}b.
\end{equation}
\end{itemize}
\end{lemma}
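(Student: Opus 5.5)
The plan is to work with the pair of conditions $\left(\eqref{gensyl}\wedge\eqref{u+v}\right)$. By Lemma \ref{2of4}, ``any two conditions enlisted in \eqref{equivalence}'' is equivalent to this pair. Condition \eqref{u+v} ties $u$ and $v$ to each other, so I read (b) and (c) as statements about one entry, with the other entry fixed by \eqref{u+v}: $v=a^{-1}cb^{-1}-u$ in (b), and $u=a^{-1}cb^{-1}-v$ in (c). Under this reading, the lemma says that once \eqref{u+v} is imposed, \eqref{gensyl} is equivalent to \eqref{auub}, and also to \eqref{avvb}. The only tool needed is the defining identity \eqref{barc}, $c=a\bar c+\bar cb$. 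Multiplying it by $b^{-1}$ on the right or by $a^{-1}$ on the left gives
$$cb^{-1}=a\bar cb^{-1}+\bar c,\qquad a^{-1}c=\bar c+a^{-1}\bar cb.$$

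For (a)$\Leftrightarrow$(b), substitute $v=a^{-1}cb^{-1}-u$ into \eqref{gensyl}. This gives $cb^{-1}-au+ub=\bar c$, that is, $au-ub=cb^{-1}-\bar c$. By the first identity above, the right-hand side equals $a\bar cb^{-1}$, which is exactly \eqref{auub}. Every step is reversible. So if $u$ satisfies \eqref{auub} and $v:=a^{-1}cb^{-1}-u$, then $(u,v)$ satisfies \eqref{gensyl} and \eqref{u+v}, and hence, by Lemma \ref{2of4}, all the conditions in \eqref{equivalence}.

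For (a)$\Leftrightarrow$(c), substitute $u=a^{-1}cb^{-1}-v$ into \eqref{gensyl} instead. This gives $av+a^{-1}c-vb=\bar c$, so $av-vb=\bar c-a^{-1}c$. By the second identity, this equals $-a^{-1}\bar cb$, which is \eqref{avvb}. Again every step reverses. As a consistency check, adding \eqref{auub} and \eqref{avvb} gives $a(u+v)-(u+v)b=a\bar cb^{-1}-a^{-1}\bar cb$. This agrees with what \eqref{u+v} yields, namely $cb^{-1}-a^{-1}c$, once both identities are used, so the directions (b)$\Rightarrow$(c) and (c)$\Rightarrow$(b) are compatible.

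The computations are routine. The main obstacle is interpretive: as literally stated for arbitrary $u,v$, conditions (b) and (c) each constrain only one entry. The proof must therefore make explicit that the complementary entry is determined through \eqref{u+v}, and that it is \eqref{u+v} together with Lemma \ref{2of4} that upgrades a single Sylvester-type equation to the full set of conditions in \eqref{equivalence}.
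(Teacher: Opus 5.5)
Your proof is correct, and it takes a different, shorter route than the paper.

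\textbf{What the paper does.} It never works with the pair \eqref{gensyl}$\wedge$\eqref{u+v}.
\begin{itemize}
\item For (b)$\Rightarrow$(a), it pushes \eqref{auub} through the integral representation of the regular map $X\mapsto aX+Xb$. This gives $a^2u-ub^2$, and from it the identity \eqref{invdiff} for $aub^{-1}-a^{-1}ub$, which involves $r$. It then defines $v:=a^{-1}\bar c-a^{-1}ub$ so that \eqref{gensyl} and \eqref{gensylvu} hold.
\item For (a)$\Rightarrow$(b), it starts from \eqref{gensyl}$\wedge$\eqref{RajUchiuv} and sets $d:=au-ub$. It derives $ad+db=acb^{-1}$ and inverts $X\mapsto aX+Xb$ again to get $d=a\bar cb^{-1}$.
\item The equivalence (a)$\Leftrightarrow$(c) is only declared analogous.
\end{itemize}

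\textbf{What your route changes.} You send every admissible pair through Lemma \ref{2of4} to \eqref{gensyl}$\wedge$\eqref{u+v}. That reduces both equivalences to a reversible one-line substitution. It uses only $c=a\bar c+\bar cb$ and the invertibility of $a$ and $b$. No integrals and no $r$ are needed. The spectral hypothesis enters only through Lemma \ref{2of4} itself. You also treat (c) explicitly rather than by analogy.

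Your explicit reading of (b) and (c), with the partner entry forced by \eqref{u+v}, is exactly what the paper's proof establishes too. Once \eqref{auub} holds, its constructed $v=a^{-1}\bar c-a^{-1}ub$ coincides with your $a^{-1}cb^{-1}-u$. So making this reading explicit is a genuine clarification of the statement, not a weakening of it.

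\textbf{What each approach buys.} The paper's route yields some by-products: the identity \eqref{invdiff}, the alternative formula $v=\bar cb^{-1}+rb^{-1}-aub^{-1}$, and a direct derivation of \eqref{auub} from \eqref{gensyl}$\wedge$\eqref{RajUchiuv}. For the lemma itself, your argument is cleaner. It also sidesteps a sign slip in the paper's intermediate formula for $cb^{-2}$: there the term $a^{-1}\bar c$ should carry a minus sign, as it correctly does in \eqref{invdiff}.
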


\begin{proof} We show the equivalence $(a)\Longleftrightarrow(b)$. The equivalence $(a)\Longleftrightarrow(c)$ is analogous.

$(b)\Rightarrow(a):$ Assume that \eqref{auub} holds. Then
$$\begin{aligned}
&a^2u-ub^2=a(au-ub)+(au-ub)b=\\
=&a\left(\int_0^{+\infty} e^{-at}acb^{-1}e^{-bt}dt\right)+\left(\int_0^{+\infty} e^{-at}acb^{-1}e^{-bt}dt\right)b=\\
=&\int_0^{+\infty}e^{-at}(a^2cb^{-1}+ac)e^{-bt}dt,
\end{aligned}$$
therefore
\begin{eqnarray}
\label{b2}
\begin{aligned}
&aub^{-1}-a^{-1}ub=a^{-1}\left(a^2u-ub^2\right)b^{-1}=\int_0^{+\infty}e^{-at}(acb^{-2}+cb ^{-1})e^{-bt}dt=\\
=&\int_0^{+\infty}e^{-at}\left(a\left(cb^{-2}\right)+\left(cb ^{-2}\right)b\right)e^{-bt}dt=cb^{-2}.\end{aligned}\end{eqnarray}
Since $r$ is the unique solution to $ar+rb=a^{-1}cb+acb^{-1}$, it is easy to see that
$$cb^{-1}+a^{-1}\bar{c}b-\bar{c}=r.$$
However, this implies that 
$$cb^{-2}=rb^{-1}+\bar{c}b^{-1}+a^{-1}\bar{c}.$$
Substituting the latter into \eqref{b2} gives
\begin{equation}
\label{invdiff}
aub^{-1}-a^{-1}ub=\bar{c}b^{-1}+rb^{-1}-a^{-1}\bar{c},\end{equation}
or, equivalently,
$$a^{-1}\bar{c}-a^{-1}ub=\bar{c}b^{-1}+rb^{-1}-aub^{-1}.$$
Set 
$$v:=a^{-1}\bar{c}-a^{-1}ub\equiv \bar{c}b^{-1}+rb^{-1}-aub^{-1}.$$ 
Then $av+ub=\bar{c}$ and $au+vb=\bar{c}+r$. By Lemma \ref{2of4} the pair $(u,v)$ satisfies all the conditions in \eqref{equivalence}.\\

$(a)\Rightarrow(b):$ Let $u$ and $v$ in $\M$ be provided such that \eqref{gensyl} and \eqref{RajUchiuv} hold. Denote by   $d:=au - ub$. Then $au = ub + d$, $aub^2 = ub^3 + db^2$, and $ub^3 = aub^2 - db^2$.
On the other hand, we have $av = \bar{c} - ub$, $
a^2 vb = a\bar{c}b - aub^2$, and $a^3 v = a^2 \bar{c} - a^2 ub$. Therefore
$$\begin{aligned}
0=&a^3 v + a^2 vb + aub^2 + ub^3
=\\
=&a^3 v + a^2 vb + 2aub^2 - db^2
=\\
=&a^2 \bar{c} - a^2 ub + a\bar{c}b - aub^2 + 2aub^2 - db^2=\\
=&a(a\bar{c} + \bar{c}b) + aub^2 - a^2 ub - db^2=\\
=&ac - \left(a(au - ub)b + db^2\right)=ac -\left(adb + db^2\right)=\\
=&ac - (ad + db)b.
\end{aligned}
$$
This is equivalent to
\[
acb^{-1} = ad + db
\Longleftrightarrow
au - ub = \int_0^\infty e^{-at}\, a c b^{-1}\, e^{-bt}\, dt.
\] 
\end{proof}
We  collect the previous analysis within the following theorem: 
\begin{theorem}[System for $u$ and $v$]\label{4of2} Let $a$, $b$, $c$, $\bar{c}$, and $r$ be appropriately given elements, such that $\sigma(a)\cup\sigma(b)\subset\Lambda_{\pi/3}$. If there exist elements $u$ and $v$ in $\M$ which solve any two equations of \eqref{gensyl}--\eqref{RajUchiuv}, then the pair $(u,v)$ solves all of them.
\end{theorem}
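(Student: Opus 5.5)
Lemma \ref{2of4} already shows that any of the pairs listed in \eqref{equivalence} implies all four equations. So the only missing case is the one flagged in the Remark after Lemma \ref{2of4}: assume $u,v\in\M$ satisfy \eqref{gensylvu} and \eqref{RajUchiuv}, and show that they also satisfy \eqref{u+v}, or equivalently \eqref{gensyl}. Then Lemma \ref{2of4} finishes the proof. The extra hypothesis $\sigma(a)\cup\sigma(b)\subset\Lambda_{\pi/3}$ is what this case will need. Under it the operator
$$T:\M\to\M,\quad T(w):=a^3w+a^2wb+awb^2+wb^3$$
from \eqref{RajUch} is injective, as cited from \cite{RBMU}. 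One can also see this from the Gel'fand argument of Section 1: $T=A^3+A^2B+AB^2+B^3$ with commuting multiplication operators $A$ and $B$. Its spectrum lies in $\{\alpha^3+\alpha^2\beta+\alpha\beta^2+\beta^3:\alpha\in\sigma(a),\beta\in\sigma(b)\}$. This expression equals $(\alpha^4-\beta^4)/(\alpha-\beta)$, which vanishes only when $\beta=i^k\alpha$ with $k\in\{1,2,3\}$. That is impossible when both arguments lie in $(-\pi/3,\pi/3)$, and the case $\alpha=\beta$ gives $4\alpha^3\neq0$.

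\textbf{Plan.} Write $s:=u+v$ and $d:=u-v$. Then \eqref{gensylvu} becomes
$$\tfrac12(as+sb)+\tfrac12(ad-db)=\bar c+r,$$
and \eqref{RajUchiuv} becomes
$$\tfrac12 T(s)+\tfrac12\bigl(-a^3d-a^2db+adb^2+db^3\bigr)=0.$$
The second bracket factors as $-(a^2d-db^2)(\ldots)$; explicitly, $-a^3d-a^2db+adb^2+db^3=-(a+\,\cdot\,b)\bigl(a^2d-db^2\bigr)$ in operator notation, i.e. $-(A+B)(A^2-B^2)d=-(A+B)^2(A-B)d$. From the first equation, $(A-B)d=2(\bar c+r)-(A+B)s$. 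Substituting this into the second gives a single equation for $s$:
$$T(s)-(A+B)^2\bigl(2(\bar c+r)-(A+B)s\bigr)=0,\quad\text{i.e.}\quad \bigl(T+(A+B)^3\bigr)s=2(A+B)^2(\bar c+r).$$
Next I would check that the target value $s_0=a^{-1}cb^{-1}$ satisfies this same equation. This is a routine computation using $(A+B)\bar c=c$ and the defining relation $(A+B)r=a^{-1}cb+acb^{-1}$ for $r$ from the proof of Lemma \ref{uvsyl}. Uniqueness then comes from injectivity of $S:=T+(A+B)^3$. Its Gel'fand symbol is $\alpha^3+\alpha^2\beta+\alpha\beta^2+\beta^3+(\alpha+\beta)^3$. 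Since $T=(A+B)(A^2+B^2)$, this equals $(\alpha+\beta)\bigl(2\alpha^2+2\beta^2+2\alpha\beta\bigr)$. The factor $\alpha+\beta$ is nonzero on $\HR$. The factor $\alpha^2+\alpha\beta+\beta^2=(\alpha^3-\beta^3)/(\alpha-\beta)$ vanishes only when $\beta=e^{\pm2\pi i/3}\alpha$, which is impossible in $\Lambda_{\pi/3}$. Hence $S$ is invertible, so $s=s_0$, which is \eqref{u+v}. Lemma \ref{2of4} then gives all four equations.

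\textbf{Main obstacle.} The delicate step is the algebra: verifying the factorization of the $d$-part, and checking that $s_0$ solves the reduced equation. If the factorization turns out to differ from what I wrote, the fallback is to apply $(A+B)^2$ to \eqref{gensylvu} and subtract directly, using the commuting-operator calculus throughout. The only spectral input needed is that the resulting polynomial in $(A,B)$ has no zeros on $\Lambda_{\pi/3}\times\Lambda_{\pi/3}$. This is exactly where the sector $\Lambda_{\pi/3}$, rather than $\HR$, is required.
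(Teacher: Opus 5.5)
Your proof is correct, but it takes a different path from the paper's. The paper rewrites \eqref{RajUchiuv} as $(A+B)(a^2v+ub^2)=0$, so $v=-a^{-2}ub^2$. It substitutes this into $(A+B)$ applied to \eqref{gensylvu} and obtains $(A^3-B^3)u=a^2\bar c+a\bar cb+a^3\bar cb^{-1}$. It then factors $A^3-B^3=(A^2+AB+B^2)(A-B)$ and inverts $A^2+AB+B^2$ to reach \eqref{auub}, and closes with Lemma \ref{uvsyl}.

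You instead pass to $s=u+v$, $d=u-v$, eliminate $(A-B)d$ using \eqref{gensylvu}, and get $2(A+B)(A^2+AB+B^2)s=2(A+B)^2(\bar c+r)$. The check you left as routine is one line: $(A^2+AB+B^2)(a^{-1}cb^{-1})=acb^{-1}+c+a^{-1}cb=(A+B)(\bar c+r)$. Both arguments use the same spectral input: invertibility of $A+B$ from $\HR$, and of $A^2+AB+B^2$ from $\Lambda_{\pi/3}$.

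Your route lands directly on \eqref{u+v}, so Lemma \ref{2of4} finishes at once. The paper's detour through Lemma \ref{uvsyl} needs one extra observation. The direction (b)$\Rightarrow$(a) of that lemma only constructs \emph{some} partner $v$ for the given $u$. One must also note that \eqref{gensylvu} determines $v$ from $u$ ($b$ being invertible) to conclude that the original $v$ is that partner. What the paper's route gives in return is a Sylvester equation for $u$ alone.

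One correction to your preliminary remarks: the claim that $T=A^3+A^2B+AB^2+B^3$ is injective under $\sigma(a)\cup\sigma(b)\subset\Lambda_{\pi/3}$ is false. Its symbol is $(\alpha+\beta)(\alpha^2+\beta^2)$, and $\alpha^2+\beta^2=0$ for $\alpha=e^{-i\pi/4}$, $\beta=e^{i\pi/4}$, both of which lie in $\Lambda_{\pi/3}$. In that sector $\arg(\beta/\alpha)$ ranges over $(-2\pi/3,2\pi/3)$, which contains $\pm\pi/2$. Concretely, in $2\times 2$ matrices take $a=\mathrm{diag}(e^{-i\pi/4},1)$ and $b=\mathrm{diag}(e^{i\pi/4},1)$. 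Then $\sigma(a)\cap\sigma(b)\neq\emptyset$, yet $T(E_{11})=0$, where $E_{11}$ is the first matrix unit. The spectral argument for $T$ needs $\Lambda_{\pi/4}$; the comment accompanying \eqref{RajUch} in the paper has the same slip. Your argument only uses invertibility of $S=T+(A+B)^3=2(A+B)(A^2+AB+B^2)$, never injectivity of $T$, so delete that remark and the proof stands.
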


\begin{proof} By Lemma \ref{2of4} it suffices to show that \eqref{gensylvu} and \eqref{RajUchiuv} imply the rest. Indeed, from \eqref{RajUchiuv} it follows that
$$a(a^2v+ub^2)+(a^2v+ub^2)b=0\Leftrightarrow v=-a^{-2}ub^2.$$
On the other hand, from \eqref{gensylvu} we have 
$$a^2u+avb+aub+vb^2=a\bar{c}+\bar{c}b+ar+rb=c+a^{-1}cb+acb^{-1}.$$
Substituting $v=-a^{-2}ub^{2}$ into the latter gives
$$\begin{aligned}&a^{-1}(a^3u-ub^3)+a^{-2}(a^3u-ub^3)b=ca^{-1}cb+acb^{-1}\Leftrightarrow \\
&a(a^3-ub^3)+(a^3u-ub^3)b=a^2c+acb+a^3cb^{-1}\Leftrightarrow\\
&a^3u-ub^3=a^2\bar{c}+a\bar{c}b+a^3\bar{c}b^{-1}.
\end{aligned}$$
Now by invoking the mutliplication operators $Au=au$ and $Bu=ub$, it follows that (since they commute)
$$\begin{aligned}&a^3u-ub^3=(A^3-B^3)(u)=(A-B)(A^2+AB+B^2)(u)=\\
&(A^2+AB+B^2)(A-B)(u)=(A^2+AB+B^2)(au-ub).
\end{aligned}$$
The transform 
$$\xi\mapsto (A^2+AB+B^2)(\xi)$$
is regular under the premise that $\sigma(a)\cup\sigma(b)\subset\Lambda_{\pi/3}$ (see \cite{RBMU} for details), which implies the existence of a unique $\xi\in\M$ which solves
$$(A^2+AB+B^2)(\xi)=a^2\bar{c}+a\bar{c}b+a^3\bar{c}b^{-1}.$$
By a direct verification we see that $\xi=a\bar{c}b^{-1}$ is a good (and hence the only) candidate for the solution, ergo
$$a^3u-ub^3=a^2\bar{c}+a\bar{c}b+a^3\bar{c}b^{-1}\Leftrightarrow au-ub=a\bar{c}b^{-1}.$$
By Lemma \ref{uvsyl} it follows that all conditions enlisted in \eqref{equivalence} are true, hence, the pair $(u,v)$ indeed solves all equations \eqref{gensyl}--\eqref{RajUchiuv}.
\end{proof}

\subsection{The parameter $v-u$: a quadratic matrix equation}

Within this subsection we obtain a condition for the difference $q:=v-u$, which will be consistent with the equations \eqref{gensyl}--\eqref{RajUchiuv}. Though one might be tempted to seek an explicit form for the missing parameter $q$, it should be noted that the system 
$$\begin{cases} u+v=a^{-1}cb^{-1};\\v-u=q\end{cases}$$
is always solvable for $(u,v)$ and with a pairwise unique solution, which would imply that \eqref{sylm-} is always solvable with a unique solution (possibly provided by \eqref{soluvab}). Since this is not the case, the premise that $q$ can be explicitly written down needs to be discarded. Instead, we do the next best thign as demonstrated below. \\

By premise, the elements $a$ and $b$ are invertible in the respective algebras, with their spectrums contained in $\HR$. Thus the sets $\sqrt{a}$ and $\sqrt{b}$ are non-empy, and every $\xi\in\sqrt{a}$ and every $\eta\in\sqrt{b}$ is invertible as well, where again
$$\sqrt{a}=\{\xi\in\A_1: \xi^2=a\},\quad \sqrt{b}=\{\eta\in\A_2: \eta^2=b\}.$$
Then the element $-b$ also has square roots, where the setwise equality holds:
$$\sqrt{-b}=i\sqrt{b}.$$
Denote by 
$$N_1:=\mat{a}{-\bar{c}}{0}{-b},\quad N_2:=\mat{a}{-\bar{c}-r}{0}{-b}.$$
Since $\sigma(a)\cap\sigma(-b)=\emptyset$, there exist unique $e_1$ and $e_2\in\M$ such that
$$ae_1+e_1b=-\bar{c},\quad ae_2+e_2b=-\bar{c}-r.$$
Then by Roth's removal theorem:
$$N_i=\mat{1_{\A_1}}{-e_i}{0}{1_{\A_2}}\mat{a}{0}{0}{-b}\mat{1_{\A_1}}{e_i}{0}{1_{\A_2}},\quad i=1,2,$$
which implies that the matrices $N_i$ have square roots in $\mathcal{M}^{-1}_0$: indeed, one can take
$$N_i^{1/2}(k_1,k_2):=\mat{1_{\A_1}}{-e_i}{0}{1_{\A_2}}\mat{(-1)^{k_1}\sqrt{a}}{0}{0}{(-1)^{k_2}i\sqrt{b}}\mat{1_{\A_1}}{e_i}{0}{1_{\A_2}},$$
where $k_1$ and $k_2$ are integers independent from one another. This demonstrates that the sets 
$$\sqrt{N_i}:=\{ \Sigma_i\in\mathcal{M}^{-1}_0: \Sigma_i\circ\Sigma_i=N_i\},\quad i=1,2,$$
are non-empty and comprise out of invertible elements in $\mathcal{M}_0^{-1}$ as well. Ergo, the $\mathcal{M}_0^{-1}-$quadratic equation
\begin{equation}\label{YNY}YN_1Y=N_2\end{equation}
is solvable in $\mathcal{M}^{-1}_0$: by employing invertibility, \eqref{YNY} is mathematically equivalent to
\begin{equation}\label{N12}
N_1^{1/2}YN_1^{1/2}N_1^{1/2}YN_1^{1/2}=N_1^{1/2}N_2N_1^{1/2},\quad N_1^{1/2}\in\sqrt{N_1}-\textrm{arbitrary},\end{equation}
and the latter reads
\begin{equation}\label{Y2}
\left(N_1^{1/2}YN_1^{1/2}\right)^2=N_1^{1/2}N_2N_1^{-1/2},
\end{equation}
which is solved for any $Y$ of the form 
\begin{equation}\label{YZ}
Y=N_1^{-1/2}ZN_1^{-1/2},
\end{equation}
where $Z$ is an arbitrary square root of the expression $N_1^{1/2}N_2N^{1/2}$, i.e., 
\begin{equation}\label{ZY}YN_1Y=N_2\Leftrightarrow Y=N_1^{-1/2}ZN_1^{-1/2},\quad Z\in\sqrt{N_1^{1/2}N_2N_1^{1/2}}, \ N_1^{1/2}\in\sqrt{N_1}.\end{equation}

With this observation in mind, we proceed with the main result of this paper. For the sake of completeness, we formulate it in its full form.

\begin{theorem} [Solvability and particular solution: $u$ and $v$ version]\label{upmv}
Let $a\in\A_1$, $b\in\A_2$, be given in the manner that $\sigma(a)\cup\sigma(b)\subset\Lambda_{\pi/4}$, and that $\sigma(a)\cap\sigma(b)\neq\emptyset$. Let  $c\in\M$ be arbitrary, and denote by $\bar{c}$ the unique solution to the equation $a\bar{c}+\bar{c}b=c$.  Moreover, let $r:=a^{-1}\bar{c}b+ a\bar{c}b^{-1}$. The following statements are equivalent: 
\begin{itemize}
\item[(a)] The equation \eqref{sylm-} is solvable, i.e., there exists an $x\in\M$ such that $$ax-xb=c.$$
\item[(b)] There exist elements $u$ and $v$ in $\M$ which solve any two (all) of the following equations:
\begin{equation}\label{gensylf}
av+ub=\bar{c}.
\end{equation}
\begin{equation}\label{gensylvuf}
au+vb=\bar{c}+a^{-1}\bar{c}b+a\bar{c}b^{-1}.
\end{equation}
 \begin{equation}\label{u+vf}
u+v=a^{-1}cb^{-1}.
\end{equation}
 \begin{equation}\label{RajUchiuvf}
a^3v+a^2vb+ub^3+aub^2=0.
\end{equation}
\item[(c)] There exist elements $u$ and $v$ in $\M$ which solve any one of the equations enlisted in \eqref{gensylf}--\eqref{RajUchiuvf}, while their difference $q:=v-u$ solves:
\begin{equation}\label{v-u}
\mat{1_{\A_1}}{q}{0}{1_{\A_2}}\mat{a}{-\bar{c}}{0}{-b}\mat{1_{\A_1}}{q}{0}{1_{\A_2}}=\mat{a}{-\bar{c}-r}{0}{-b}.
\end{equation}
\end{itemize}
If the equation \eqref{sylm-} is solvable, then with respect to the mentioned parameters $u$ and $v$, the expressions
\begin{equation}\label{solutionsuvab}
x_v:=-(a^2vb^{-1}+av);\quad x_u:=a^{-1}ub^2+ub
\end{equation}
define one and the same particular solution to \eqref{sylm-}.
\end{theorem}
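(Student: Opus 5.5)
The plan is to reduce everything to results already established, via two observations. First, $\Lambda_{\pi/4}\subset\Lambda_{\pi/3}\subset\HR$ and $0\notin\Lambda_{\pi/4}$. Hence $a$ and $b$ are invertible, every hypothesis of Theorem \ref{part}, Corollary \ref{gensystemsol}, Lemma \ref{uvsyl} and Theorem \ref{4of2} is satisfied, and the multiplication operators $Ay=ay$, $By=yb$ on $\M$ commute and have spectra in $\Lambda_{\pi/4}$. Second, condition \eqref{v-u} is only an entrywise identity in $\M$. Multiplying out the left-hand side of \eqref{v-u} gives $\mat{a}{aq-\bar c-qb}{0}{-b}$, so \eqref{v-u} is equivalent to the single Sylvester equation
$$aq-qb=-r,\qquad q=v-u.$$

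$(a)\Leftrightarrow(b)$. By Corollary \ref{gensystemsol}, \eqref{sylm-} is solvable if and only if some pair $(u,v)$ satisfies \eqref{gensylf} and \eqref{RajUchiuvf}. By Theorem \ref{4of2}, applicable since $\Lambda_{\pi/4}\subset\Lambda_{\pi/3}$, a pair solving any two of \eqref{gensylf}--\eqref{RajUchiuvf} solves all four. The final claim, that \eqref{solutionsuvab} gives one and the same particular solution, is then exactly the last part of Corollary \ref{gensystemsol}.

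$(b)\Rightarrow(c)$. If $(u,v)$ solves all four equations, then Lemma \ref{uvsyl} gives $au-ub=a\bar cb^{-1}$ and $av-vb=-a^{-1}\bar cb$. Subtracting these yields $aq-qb=-(a^{-1}\bar cb+a\bar cb^{-1})=-r$, which is \eqref{v-u}.

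$(c)\Rightarrow(b)$. This is the substantive direction, and I expect it to be the main obstacle. We are given one of the four equations together with $(A-B)q=-r$, and we must produce a second equation from the list. I would treat each choice of the given equation separately.

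\begin{itemize}
\item If \eqref{u+vf} is given, then $u=\tfrac12(s-q)$ with $s=a^{-1}cb^{-1}$. Hence $(A-B)u=\tfrac12\bigl((A-B)s+r\bigr)$. Substituting $c=a\bar c+\bar cb$ should give $au-ub=a\bar cb^{-1}$, and Lemma \ref{uvsyl} then finishes.
\item If \eqref{gensylf} or \eqref{gensylvuf} is given, I would write $v=u+q$. This gives $(A+B)u=\bar c-aq$, respectively $(A+B)u=\bar c+r-qb$, which determines $u$ uniquely by Lemma \ref{regeq}. Applying $A-B$ and using that $A$, $B$ and $(A+B)^{-1}$ commute should then again yield \eqref{auub}.
\item If \eqref{RajUchiuvf} is given, it reads $(A+B)(a^2v+ub^2)=0$, so $A^2v+B^2u=0$. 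With $v=u+q$ this becomes $(A^2+B^2)u=-A^2q$. This is where the sector $\Lambda_{\pi/4}$ is genuinely used: there $\sigma(A)^2\cap(-\sigma(B)^2)=\emptyset$, so $A^2+B^2$ is invertible. That allows $u$ to be solved for and checked against \eqref{auub}.
\end{itemize}

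In each case the calculation rests on commutativity of $A$ and $B$ and on uniqueness for the regular operators $A+B$, $A^2+AB+B^2$ and $A^2+B^2$.

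The sign bookkeeping in the $(c)\Rightarrow(b)$ cases needs care. A quick check of $(A-B)(u+v)$ computed from \eqref{u+vf}, against the sum of \eqref{auub} and \eqref{avvb}, shows that the terms $a^{-1}\bar cb$ and $a\bar cb^{-1}$ enter with opposite signs in the two computations. So before writing the proof I would recheck the formulas of Lemma \ref{uvsyl}, and if needed fix the correct sign convention for $r$ in \eqref{v-u}. Everything else is a direct invocation of the earlier results.
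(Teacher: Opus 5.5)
Your proposal is correct and is essentially the paper's argument: $(a)\Leftrightarrow(b)$ from Corollary \ref{gensystemsol} and Theorem \ref{4of2}, $(b)\Rightarrow(c)$ by subtraction, and $(c)\Rightarrow(b)$ case by case, with $\Lambda_{\pi/4}$ used only to make $X\mapsto a^2X+Xb^2$ invertible in the \eqref{RajUchiuvf} case (in the \eqref{gensylf}/\eqref{gensylvuf} cases you need not invert $A+B$, since \eqref{gensylf} minus $aq-qb=-r$ is exactly \eqref{gensylvuf}, which is what the paper's identity \eqref{mat-uv} expresses). Your closing sign worry is unfounded: $(A-B)(a^{-1}cb^{-1})=cb^{-1}-a^{-1}c=a\bar c b^{-1}-a^{-1}\bar c b$, which is precisely the sum of \eqref{auub} and \eqref{avvb}, so the sign convention for $r$ in \eqref{v-u} is correct and nothing needs fixing.
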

\begin{proof} $(a)\Longleftrightarrow(b)$: This follows from Corollary \ref{gensystemsol}, Lemma \ref{2of4}, and Theorem \ref{4of2}.

$(a)\Rightarrow(c):$ By  Corollary \ref{gensystemsol} and Theorem \ref{4of2}, the equation \eqref{sylm-} is solvable if and only if any two (all) the conditions enlisted in \eqref{gensylf}--\eqref{RajUchiuvf} are true. Then, from  \eqref{gensylf} and \eqref{gensylvuf} it follows that
$$N_0^-=\mat{a}{0}{0}{-b}=\mat{1_{\A_1}}{-u}{0}{1_{\A_2}}\mat{a}{-\bar{c}}{0}{-b}\mat{1_{\A_1}}{v}{0}{1_{\A_2}}$$
while simultaneously
$$N_0^-=\mat{a}{0}{0}{-b}=\mat{1_{\A_1}}{-v}{0}{1_{\A_2}}\mat{a}{-\bar{c}-r}{0}{-b}\mat{1_{\A_1}}{u}{0}{1_{\A_2}}.$$
Equating the two gives \eqref{v-u}. 

$(c)\Rightarrow(a):$ Assume that \eqref{v-u} is satisfied. Then as before we have
\begin{eqnarray}\label{mat-uv}
\begin{aligned}
&\mat{1_{\A_1}}{-u}{0}{1_{\A_2}}\mat{a}{-\bar{c}}{0}{-b}\mat{1_{\A_1}}{v}{0}{1_{\A_2}}=\\
=&\mat{1_{\A_1}}{-v}{0}{1_{\A_2}}\mat{a}{-\bar{c}-r}{0}{-b}\mat{1_{\A_1}}{u}{0}{1_{\A_2}}.
\end{aligned}
\end{eqnarray}
If \eqref{gensylf} is true, then the left-hand side of the latter is equal to $N_0^-$, implying that the right-hand side is also equal to $N_0^-$. However, this concludes that \eqref{gensylvuf} is also true, and by Lemma \ref{2of4}, the equation \eqref{sylm-} is solvable. The same reasoning holds if one assumes that \eqref{gensylvuf} is true: it follows from \eqref{mat-uv} that \eqref{gensylf} must also be true. 

Now assume that \eqref{u+vf} is true. Then $u=a^{-1}cb^{-1}-v$, and substituting this into $v-u=q$ implies that $q=2v-a^{-1}cb^{-1}$. Then from \eqref{v-u} we have
$$\begin{aligned}
&\mat{a}{-\bar{c}-r}{0}{-b}=\\
=&\mat{1_{\A_1}}{2v-a^{-1}cb^{-1}}{0}{1_{\A_2}}\mat{a}{-\bar{c}}{0}{-b}\mat{1_{\A_1}}{2v-a^{-1}cb^{-1}}{0}{1_{\A_2}}=\\
=&\mat{a}{-\bar{c}-2vb+a^{-1}c}{0}{-b}\mat{1_{\A_1}}{2v-a^{-1}cb^{-1}}{0}{1_{\A_2}}=\\
=&\mat{a}{2av-cb^{-1}-\bar{c}-2vb+a^{-1}c}{0}{-b},
\end{aligned}$$
and this equality is true if and only if
$$2av-2vb=cb^{-1}-a^{-1}c-r=cb^{-1}-a^{-1}c-a^{-1}\bar{c}b-a\bar{c}b^{-1}.$$
Writting $c$ as $a\bar{c}+\bar{c}b$ gives
$$2av-2vb=a\bar{c}b^{-1}+\bar{c}-\bar{c}-a^{-1}\bar{c}b-a^{-1}\bar{c}b-a\bar{c}b^{-1}=-2a^{-1}\bar{c}b,$$
i.e., \eqref{avvb} is true. By Lemma \ref{uvsyl} and Lemma \ref{2of4}, this is equivalent to the consistency of \eqref{sylm-}. 

Finally, assume that \eqref{RajUchiuvf} holds. Then we have
$$0=a(a^2v+ub^2)+(a^2v+ub^2)b\Leftrightarrow a^2v+ub^2=0\Leftrightarrow a^2v=-ub^2.$$
On the other hand, since $v=q+u$, where $q$ satisfies \eqref{v-u}, it follows that
$$a^2q+a^2u=-ub^2\Leftrightarrow a^2u+ub^2=-a^2q.$$
Now we have
$$a^2(u+v)+(u+v)b^2=2a^2u+a^2q+2ub^2+qb^2=a^2u+ub^2+qb^2=qb^2-a^2q.$$
Finally, from \eqref{v-u} it follows that
$$-\bar{c}-r=-\bar{c}-qb+aq\Leftrightarrow qb-aq=r\Leftrightarrow b^2q-a^2q=ar+rb=a^{-1}cb+acb^{-1}$$
where we used $b^2q-a^2q=(A+B)(B-A)(q)$ in terms of the multiplication operators $Aq=aq$ and $Bq=qb$. Therefore, equating the two expressions for $qb^2-aq^2$ gives
$$a^{-1}cb+acb^{-1}=a^2(u+v)+(u+v)b^2.$$
The latter is a regular Sylvester equation, and a direct verification shows that $a^{-1}cb^{-1}$ solves the equation, implying that \eqref{u+vf} is true, and the proof is complete. 

The expressions \eqref{solutionsuvab} follow immediately from \eqref{soluvab} in Theorem \ref{part}.
\end{proof}
Due to \eqref{YNY}--\eqref{ZY}, the equation \eqref{v-u} is indeed solvable for some $Y\in\mathcal{M}_0^{-1}$, provided via \eqref{YZ}.  Therefore, if among all such solution matrices $Y$ there are some of the form
$$Y_q=\mat{1_{\A_1}}{q}{0}{1_{\A_2}}$$
then \eqref{sylm-} is solvable, and one has a regular system $u+v=a^{-1}c^{-1}$, $v-u=q$, which defines a particular solution via \eqref{solutionsuvab}. Depending on how many different matrices $Y_q$ there are, one has different choices for a particular solution to \eqref{sylm-}.

\noindent\textbf{Data availability statement.} Data availability not applicable since no data sets were generated during this research.\\

\noindent\textbf{Declaration of conflict of interest.} The author declares there is no conflict of interest in publishing the results obtained in this research.\\

\noindent\textbf{Acknowledgements.} The author is supported by the Bulgarian Ministry of Education
and Science, Scientific Programme ``Enhancing the Research Capacity in Mathematical Sciences (PIKOM)",
No. DO1-67/05.05.2022, and by the Ministry of Science, Technological Development and Innovations, Republic of Serbia, grant No. 451-03-66/2026-03/200029.


\begin{thebibliography}{00}

\bibitem{Trap} Antoine, J.-P., Inoue, A., Trapani, C.: {\it Partial $*-$Algebras and their Operator Realizations}. Kluwer,
Dordrecht (2002)

\bibitem{R2} W. Arendt, F. R\"{a}biger and A. Sourour, \textit{Spectral properties of the operator equation $AX+XB=Y$}, Quart. J. Math. Oxford 2:45 (1994) 133--149.

\bibitem{BDI} Bellomonte, G., Djordjevi\'c, B., Ivkovi\'c, S.,  {\it On representations and topological aspects of positive
maps on non-unital quasi $*-$algebras}, Positivity 28(5), 66 (2024).

\bibitem{BIT} Bellomonte, G. Ivkovi\'c, S. Trapani, {Banach bimodule-valued positivemaps: inequalities and representations}, Banach J. Math. Anal. (2026) 20:12. https://doi.org/10.1007/s43037-025-00465-y

\bibitem{Bel} Bellomonte, G. Ivkovi\'c, S. Trapani, C., {\it GNS construction for positive $C^*-$valued sesquilinear maps on a quasi $*-$aglebra},  Mediterr. J. Math., 21 (2024) 166 (22 pp) (2024)
\bibitem{BL} A. Bezai, F. Lombarkia, {\it On the operator equation $AX-XB+XDX=C$},  Rend. Circ. Mat. Palermo, II. Ser 72, 4179--4187 (2023). https://doi.org/10.1007/s12215-023-00887-3


\bibitem{RBPR} R. Bhatia and P. Rosenthal, {\it How and why to solve the operator equation $AX-XB=Y$}, Bull. London Math. Soc. {29} (1997) 1--21.

\bibitem{RBMU} R. Bhatia and M. Uchiyama, {\it The operator equation $\sum_{i=0}^n A^{n-i}XB^i=Y$}, Expo. Math. 27 (2009) 251--255.

\bibitem{JB} J. Bra\v ci\v c, {\it Local commutants and ultrainvariant subspaces}, J. Math. Anal. Appl. 506 (2022) 125693. https://doi.org/10.1016/j.jmaa.2021.125693

\bibitem{ECLHDS} E. K.-W. Chu, L. Ho, D. B. Szyld and J. Zhou, {\it Numerical solution of singular Sylvester equations}, J. Comput. Appl. Math. 436 (2024) 115426 https://doi.org/10.1016/j.cam.2023.115426

\bibitem{DIN} N. \v C. Din\v ci\'c, {\it Solving the Sylvester equation $AX-XB = C$ when $\sigma(A)\cap \sigma(B) \neq\emptyset$}, Electron. J. Linear Algebra 35 (2019) 1--23.


\bibitem{BD1} B. D. Djordjevi\'c, {\it Operator algebra generated by an element from the module $\mathcal{B}(V_1,V_2)$}, Complex Analysis Operator Theory (2019)
https://doi.org/10.1007/s11785-019-00899-x.

\bibitem{BD2} B. D. Djordjevi\'c, {\it Singular Sylvester equation in Banach spaces and its applications: Fredholm theory approach} 622 (2021) 189--214. https://doi.org/10.1016/j.laa.2021.03.035

\bibitem{BDD2} B. D. Djordjevi\'c, {\it The equation $AX-XB=C$ without a unique solution: the ambiguity which benefits applications}, Topics in Operator Theory,  Zb. Rad. MISANU 20:28 (2022) 395--442
http://elib.mi.sanu.ac.rs/files/journals/zr/28/zrn28p395-442.pdf

\bibitem{BDND1} B. D. Djordjevi\'c and  N. \v C. Din\v ci\'c, {\it Classification and approximation of solutions to Sylvester matrix equation}, Filomat 13:33 (2019) 4261--4280
https://doi.org/10.2298/FIL1913261D

\bibitem{BDZG} B. D. Djordjevi\'c and  Z. Lj. Golubovi\'c, {\it Summation of hyperharmonic series in Banach algebras and Banach
bimodules}, Filomat 40:2 (2026), 583–600. https://doi.org/10.2298/FIL2602583D

\bibitem{MD} M. P. Drazin, {\it On a result of J. J. Sylvester}, Linear Algebra Appl. 505 (2016) 361--366 http://dx.doi.org/10.1016/j.laa.2016.05.007

\bibitem{DS} Dunford N., Schwartz J. T., {\it Linear Operators Part I: General Theory}, Wiley-Interscience 1988.

\bibitem{FRG} F. R. Gantmacher, {\it The Theory of Matrices}, Chelsea, New York 1959.

\bibitem{GW} F. Gerrish and A. J. B. Ward, {\it Sylvester's Matrix Equation and Roth's Removal Rule}, The Mathematical Gazette 82:495 (1998) 423--430.

\bibitem{MGLD} M. C. Gouveia and L. D. Dinis, {\it On the solutions of Sylvester, Lyapunov and Stein equations over arbitrary rings}, Int. J. Pur. Appl. Math. 24:1 (2005) 131--137.

\bibitem{REH} R. E. Hartwig, {\it Roth's removal rule revisited}, Linear Algebra Appl. 48 (1983) 91--115.

\bibitem{HC} Q. Hu and D. Cheng, {\it The polynomial solution to the Sylvester matrix equation}, Linear Algebra Appl. 172 (1992) 283--131.

\bibitem{SGL} S.-G. Lee and Q.-P. Vu, {\it Simultaneous solutions of operator Sylvester equations}, Studia Mathematica 222 (1) (2014) 87--96. DOI: 10.4064/sm222-1-6

\bibitem{ZMREMMFM} Z. Mousavi, R. Eskandari, M. S. Moslehian, F. Mirzapour, {\it Operator equations $AX+YB=C$ and $AXA^*+BYB^*=C$ in Hilbert $C^*-$modules}, Linear Algebra Appl. 517 (2017) 85–-98. http://dx.doi.org/10.1016/j.laa.2016.12.001

\bibitem{VM} V. M\"uller, {\it Spectral Theory of Linear Operators}, Birkh\"auser (2007).

\bibitem{HM} H. Mustafayev,  {\it Intertwining Conditions for Two Isometries on Banach Spaces}, Complex Anal. Oper. Theory 17, 131 (2023). https://doi.org/10.1007/s11785-023-01436-7

\bibitem{RW} W. E. Roth, {\it The Equations $AX-YB=C$ and $AX-XB=C$ in Matrices}, Proc. Amer. Math. Soc. 3 (1952) 392--396.

\bibitem{AS} A. Sasane, {\it The Sylvester equation in Banach algebras}, Linear Algebra Appl. {631} (2021) 1--9 https://doi.org/10.1016/j.laa.2021.08.015

\bibitem{ASMD} A. Shirilord and M. Dehghan, {\it Iterative method for constrained systems of conjugate transpose matrix
equations}, Appl. Numer. Math. (2024) https://doi.org/10.1016/j.apnum.2024.01.016

\bibitem{JJS} J. J. Sylvester, {\it Sur l'equation en matrices $px=xq$}, C. R. Acad. Sci. Paris, 99 (1884) 67--71 and 115--116.


\bibitem{HWXSJH} H. Wang, X. Sun and J. Huang, {\it On the solvability of generalized Sylvester operator equations} Operators Matrices 16:3 (2022) , 697--708 dx.doi.org/10.7153/oam-2022-16-51
\end{thebibliography}
\end{document}